\newtheorem{thm}{Theorem}[section]
\newtheorem{cor}[thm]{Corollary}
\newtheorem{defn}[thm]{Definition}
\newtheorem{lem}[thm]{Lemma}
\newtheorem{prop}[thm]{Proposition}
\newtheorem{remark}[thm]{Remark}
\numberwithin{equation}{section}
\begin{document}

\title[Star-product of path-slice functions]{ Path-slice  star-product   on non-axially symmetric domains in several quaternionic variables}
\author{Xinyuan Dou}
\email[Xinyuan Dou]{douxinyuan@ustc.edu.cn}
\address{Department of Mathematics, University of Science and Technology of China, Hefei 230026, China}
\address{Institute of Mathematics, AMSS, Chinese Academy of Sciences, Beijing 100190, China}
\author{Ming Jin\textsuperscript{\Letter}}
\email[Ming Jin]{mjin@must.edu.mo}
\address{Faculty of Innovation Engineering, Macau University of Science and Technology, Macau, China} 
\author{Guangbin Ren}
\email[Guangbin Ren]{rengb@ustc.edu.cn}
\address{Department of Mathematics, University of Science and Technology of China, Hefei 230026, China}
\author{Ting Yang}
\email[Ting Yang]{tingy@aqnu.edu.cn}
\address{School of Mathematics and Physics, Anqing Normal University, Anqing 246133, China}
\keywords{several quaternionic variables; path-slice functions; $*$-product;  non-axially symmetric domains; slice topology}
\thanks{This work was supported by the China Postdoctoral Science Foundation (2021M703425) and the NNSF of China (12171448).}

\subjclass[2020]{Primary: 30G35; Secondary: 32A30}

\begin{abstract}
This paper extends the $*$-product from slice analysis to weakly slice analysis in  several  quaternionic variables, focusing on non-axially symmetric domains. It diverges from traditional applications in axially symmetric domains to address slice regularity in more complicated cases. The approach involves redefining the $*$-product for path-slice functions, borrowing techniques from strongly slice analysis. Key to this work is the introduction of relative stem-preserving set pairs and real-path-connected sets, which help establish a direct link between path-slice functions and their stem functions. The study culminates in conditions under which weakly slice regular functions form an algebra in specific slice domains, broadening the scope of slice analysis.
\end{abstract}

\maketitle

\section{Introduction}

The field of slice analysis, particularly concerning quaternionic variables, has experienced a significant transformation. Originally grounded in Euclidean topology \cite{Gentili2007001}, it is now progressively adopting a more intricate slice topology, as indicated in recent studies \cite{Dou2023001}. This transition from a traditionally point-based framework to one that is path-dependent represents a pivotal shift in the field, enabling the exploration of more complicated mathematical terrains. A crucial aspect of this evolution is the introduction of the $*$-product in non-axially symmetric domains, which not only challenges established methodologies but also paves the way for innovative research avenues.

Tracing the history of this development, we find its roots in Hamilton's introduction of quaternions in 1843. This innovation expanded the complex number system and laid the foundation for advancements in quaternionic analysis.
 Initially, the field was notably influenced by Dirac's theory, which focused on linearizing  the relativistic wave  and led to the theoretical prediction of 
 the existence of particles identical in mass to electrons but with opposite electric charges.  This line of research eventually evolved into Atiyah-Singer's spin geometry, marking a major milestone.

 Early work on quaternionic analysis, particularly influenced by Fueter's theory   \cite{Fueter1934001}, revolved around the elliptic version of the Dirac operator. However, this approach faced challenges in handling polynomials, highlighting the need for more comprehensive methods to analyze quaternionic functions.
 
 A major advancement was made by Gentili and Struppa \cite{Gentili2007001}. They transformed slice analysis by adapting the theory of holomorphic functions from complex variables to the quaternionic setting. Their methodology reimagined quaternionic space $\mathbb{H}$ as a collection of complex planes, each distinguished by its own imaginary unit. This perspective allowed for a more nuanced analysis of slice regular functions, particularly those adhering to the Cauchy-Riemann equations.
 
 This innovative approach significantly enhanced the understanding of quaternionic functions and spurred further exploration in the field. Notable areas of progress included quaternionic Schur analysis \cite{Alpay2012001}, quaternionic operator theory \cite{Alpay2015001, MR3887616, MR3967697,Gantner2020001,MR4496722}, and the extension of slice analysis to more generalized structures like real Clifford algebras, octonions, real alternative $*$-algebras and the most general setting of even-dimensional Euclidean space \cite{Colombo2009002,Gentili2010001,Ghiloni2011001,Dou2023001} in one variable, and \cite{Colombo2012002,Dou2023002,Ghiloni2012001,Ghiloni2020001} in several variables.

Recent developments recognize the natural topology for slice analysis of quaternions as the slice topology \cite{Dou2023001}. This understanding has broadened the study of slice regular functions on some sets including non-axially symmetric domains, addressing the complexities of path-dependent functions. Such advancements mark a significant progression in weakly slice analysis, moving away from traditional point-based theories to embrace path-slice functions, which open new perspectives in domains of slice regularity and slice analysis in several variables.

The field distinguishes two types of slice regular functions: weakly slice regular functions, introduced by Gentili and Struppa \cite{Gentili2007001}, and strongly slice regular functions, introduced by Ghiloni and Perotti  \cite{Ghiloni2011001}. The latter expanded the concept to quadratic cones in real alternative $*$-algebras using stem functions, crucial for defining slice functions and extracting properties like holomorphy and multiplication.

The $*$-product of slice regular functions, was firstly introduced in \cite{Gentili2008001} on axially symmetric domains. The $*$-product of slice regular functions on non-symmetric domains in the Euclidean topology of $\mathbb{H}$ is studied in \cite{MR4182982}. In this paper, we present a new approach in the field of slice analysis by innovating the application of the $*$-product to weak slice analysis in the context of several variables, particularly focusing on non-axially symmetric domains. Our primary innovation lies in redefining the $*$-product by harnessing stem functions within the realm of path-slice functions.  

Central to our approach is the introduction of the concept of relative stem-preserving set pairs. We consider two subsets, $\Omega_1$ and $\Omega_2$, within $\mathbb{H}_s^n$. The uniqueness of $\Omega_1$ stems from its real-path-connected nature, ensuring connectivity between any two points within it via paths originating in $\mathbb{R}^n$. In contrast, $\Omega_2$ exhibits an $\Omega_1$-stem-preserving characteristic, denoting a specific form of compatibility with $\Omega_1$. Within this framework, we introduce a path-slice function $f$ that maps from $\Omega_2$ to $\mathbb{H}$, emphasizing its unique properties in relation to the path-structure of its domain.

A key result of our research is the development and validation of a novel function, $F_{\Omega_1}^f$. This function acts on paths within $\mathbb{C}^n$ that are associated with $\Omega_1$, mapping them to a two-dimensional quaternionic vector space, $\mathbb{H}^{2\times 1}$. The design of $F_{\Omega_1}^f$ draws inspiration from Fueter's construction techniques. Furthermore, we explore the scenario where another function, $F$, acts as a path-slice stem function of $f$. In this context, we demonstrate that $F_{\Omega_1}^f$ and $F$ align identically when $F$ is confined to paths in $\mathbb{C}^n$ associated with $\Omega_1$. This finding underscores the equivalence of the two functions under specified conditions.

Our research also transitions from focusing on path-dependent functions to point-based functions, highlighting their intrinsic connection. This transition mirrors the evolution from extension problems in holomorphic functions to functions based on paths, with point-based functions offering advantages in handling slice regular functions.

At the heart of our novel approach is the definition of $\mathscr{F}_{\Omega_1}^f$. This function maps inputs from $\Omega_1$ to a two-dimensional space within $\mathbb{H}^{2\times 1}$. The definition of $\mathscr{F}_{\Omega_1}^f$ varies depending on whether the input $q$ is real or not, showcasing the function's versatility and adaptability to different input scenarios. Our objective is to establish that $\mathscr{F}_{\Omega_1}^f$ is well-defined, given the constraints and properties of the involved sets and functions.

Lastly, addressing the intricacies of the $*$-product, we introduce the concept of self-path-preserving domains. These domains play a pivotal role similar to axially symmetric domains in conventional slice analysis but are specifically tailored for non-axially symmetric domains where the usual bijection between stem and slice functions does not hold. This innovative approach indicates a substantial deviation from traditional methodologies. Furthermore, we show that within any self-stem-preserving domain of the slice cone in several quaternionic spaces, the class of path-slice functions forms an associative unitary real algebra under the $*$-product.

The structure of the paper is as follows:
Section \ref{sc-pre} revisits essential concepts in the weakly slice analysis of several variables, including topics like n-dimensional weakly slice cones and path-slice functions.
In Section \ref{sec-path slice stem function}, the focus is on establishing conditions under which a path-slice function's stem function can be represented by the path-slice function itself. This section introduces concepts such as relative stem-preserving set pairs and real-path-connected sets.
Section \ref{sec-star product} defines the $*$-product for path-slice functions, building upon the findings of Section \ref{sec-path slice stem function}. Section \ref{sec-self stem} delves into the properties of the $*$-product and outlines the criteria for when weakly slice regular functions within a slice-open set can form an algebra.

\section{Preliminaries}\label{sc-pre}

This section revisits essential concepts and definitions in the theory of slice regular functions.

 We denote $\mathbb{H}$ as the algebra of quaternions and define
\begin{equation*}
	\mathbb{S}:=\{I\in\mathbb{H}:I^2=-1 \}
\end{equation*}
as the set of quaternionic imaginary units. Considering $n\in\mathbb{N}$, we define the $n$-dimensional weakly slice cone as
\begin{equation*}
	\mathbb{H}_s^n:=\bigcup_{I\in\mathbb{S}}\mathbb{C}_I^n,
\end{equation*}
where $$\mathbb{C}_I:=\mathbb{R}+\mathbb{R}I  \quad  \mbox{and}\quad \mathbb{C}_I^n:=(\mathbb{C}_I)^n.$$   The slice topology is then described as
\begin{equation*}
	\tau_s(\mathbb{H}^n_s):=\{\Omega\subset \mathbb{H}^n_s\ :\Omega_I\in\tau(\mathbb{C}_I^n),\ \forall\ I\in\mathbb{S}\},
\end{equation*}
where $$\Omega_I:=\Omega\cap\mathbb{C}_I^n$$ and 
 $\tau(\mathbb{C}_I^n)$ denotes the classical topology of $\mathbb{C}_I^n$  when we identify $ \mathbb{C}_I^n $ canonically with $\mathbb C^n$.

Open sets, connected sets, and paths in this topology  $\tau_s(\mathbb{H}^n_s)$  are referred to as slice-open sets, slice-connected sets, and slice-paths in $\Omega$, respectively.

For each $I\in\mathbb{S}$, an isomorphism 
$\Psi_i^I: \mathbb{C}^n \xlongrightarrow[ ]{} \mathbb{C}_I^n$
is defined by 
$$	\Psi_i^I(x+yi)=x+yI,$$
for any $x, y\in \mathbb R^n$.

We denote by $\mathscr{P}(\mathbb{C}^n)$ the set of continuous paths  in $\mathbb{C}^n$ with their initial points in $\mathbb{R}^n$, i.e., 
\begin{equation*}
	\mathscr{P}(\mathbb{C}^n):=\{\gamma\in C([0,1], {\mathbb C}^n):  \gamma(0)\in\mathbb{R}^n \}.
\end{equation*}
For a subset $\Omega$ within $\mathbb{H}_s^n$, we define  
\begin{equation*} \mathscr{P}\left(\mathbb{C}^n,\Omega\right):=\left\{\delta\in\mathscr{P}\left(\mathbb{C}^n\right):\exists\ I\in\mathbb{S},\mbox{ such that } \delta^{I}\subset\Omega\right\},
\end{equation*} 
where 
$$\delta^I := \Psi_i^I\circ\delta$$ represents a path within $\Omega_I$. 

Geometrically, $\mathscr{P}\left(\mathbb{C}^n,\Omega\right)$ consists of all paths    $\mathscr{P} (\mathbb{C}^n)$ in the space $\mathbb C^n$ such that it exists  at least one  lift from 
$\mathbb C^n$ to some slice $\mathbb C^n_I$ which lies entirely in $\Omega$.

Given a fixed path $\gamma \in \mathscr{P} (\mathbb{C}^n, \Omega)$, we then define
 \begin{equation*}
 	{\mathbb{S}(\Omega,\gamma)}:=\left\{I\in\mathbb{S}: \gamma^{I}\subset\Omega\right\} 
 \end{equation*}
to record the  imaginary units   for which the lift path $\gamma^I$ exists in the slice $\mathbb C^n_I$ and  the path $\gamma^I$  lies within $\Omega$.

\begin{defn}
	For a subset $\Omega\subset\mathbb{H}_s^n$, a function $f:\Omega\rightarrow\mathbb{H}$ is called \textit{\textbf{path-slice}} if there exists a function $$F:\mathscr{P}(\mathbb{C}^n,\Omega)\rightarrow \mathbb{H}^{2\times 1}$$ satisfying
	\begin{equation}\label{eq-fcg}
		f\circ\gamma^{I}(1)=(1,I)F(\gamma),
	\end{equation}
	for any $\gamma\in\mathscr{P}(\mathbb{C}^n,\Omega)$ and $I\in\mathbb{S}(\Omega,\gamma)$.
	
	 We refer to $F$ as a path-slice stem function of $f$. The class of path-slice functions defined on $\Omega$ is denoted by $\mathcal{PS}(\Omega)$, and the class of path-slice stem functions of $f\in\mathcal{PS}(\Omega)$ is denoted by $\mathcal{PSS}(f)$.
\end{defn}

\begin{defn}
	For a subset $\Omega$ in the slice topology $\tau_s\left(\mathbb{H}_s^n\right)$, a function $f:\Omega\rightarrow\mathbb{H}$ is termed \textit{\textbf{(weakly) slice regular}} if for each $I\in\mathbb{S}$, the restriction $f_I:=f|_{\Omega_I}$ is (left $I$-)holomorphic. That is, $f_I$ is real differentiable and satisfies
	\begin{equation*}
		\frac{1}{2}\left(\frac{\partial}{\partial x_\ell}+I\frac{\partial}{\partial y_\ell}\right)f_I(x+yI)=0,\qquad\mbox{on}\qquad\Omega_I.
	\end{equation*}
	The class of weakly slice regular functions defined on $\Omega$ is denoted by $\mathcal{SR}(\Omega)$.
\end{defn}

It is established that functions which are weakly slice regular can be classified within the category of path-slice functions \cite[Corollary 5.9]{Dou2023002}.

\section{Path-slice  stem functions}\label{sec-path slice stem function}

In this section, we explore the properties of path-slice stem functions, which are fundamental to the definition of the star-product for path-slice functions.

In our research, it is frequently necessary for the domain we are considering to be real-path-connected.

\begin{defn}\label{def-osmh}
	A subset $\Omega\subset\mathbb{H}_s^n$ is termed \textit{\textbf{real-path-connected}} if, for any point $q\in\Omega$, there exists a path $\gamma\in\mathscr{P}(\mathbb{C}^n,\Omega)$ and an imaginary unit $I\in\mathbb{S}(\Omega,\gamma)$ such that $\gamma^I(1) = q$.
\end{defn}

Recall that  $\mathcal{PS}(\Omega)$ represents the class of path-slice functions, while $\mathcal{PSS}(f)$ denotes the class of path-slice stem functions corresponding to a function $f$ within $\mathcal{PS}(\Omega)$, with these functions being defined on the domain $\Omega$.

\begin{prop}\label{pr-spsfeq}
	Suppose $\Omega\subset\mathbb{H}_s^n$ is real-path-connected. If $f,g\in\mathcal{PS}(\Omega)$ and they share a common path-slice stem function $F\in\mathcal{PSS}(f)\cap \mathcal{PSS}(g)$, then $f$ and $g$ are identical functions, i.e., $f = g$.
\end{prop}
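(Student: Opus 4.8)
The plan is to show that $f$ and $g$ agree at every point of $\Omega$ by using real-path-connectedness to reach any point via a suitable path, and then invoking the defining relation \eqref{eq-fcg} for the shared stem function $F$. Fix an arbitrary point $q\in\Omega$. Since $\Omega$ is real-path-connected, there exist a path $\gamma\in\mathscr{P}(\mathbb{C}^n,\Omega)$ and an imaginary unit $I\in\mathbb{S}(\Omega,\gamma)$ with $\gamma^I(1)=q$. The strategy is to evaluate both $f$ and $g$ at $q$ through this path and compare.

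Concretely, since $F\in\mathcal{PSS}(f)$ and $I\in\mathbb{S}(\Omega,\gamma)$, the path-slice identity gives $f(q)=f\circ\gamma^I(1)=(1,I)F(\gamma)$. Likewise, since $F\in\mathcal{PSS}(g)$ and the same $\gamma$ and $I$ are admissible, $g(q)=g\circ\gamma^I(1)=(1,I)F(\gamma)$. The right-hand sides are literally the same expression $(1,I)F(\gamma)\in\mathbb{H}$, so $f(q)=g(q)$. As $q\in\Omega$ was arbitrary, $f=g$ on all of $\Omega$.

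I expect there to be essentially no obstacle here: the only substantive ingredient is the hypothesis of real-path-connectedness, which is exactly what guarantees that \emph{every} point of $\Omega$ is of the form $\gamma^I(1)$ for some admissible pair $(\gamma,I)$, so that the path-slice defining equation applies at that point. The remaining content is a one-line identification of the two expressions $(1,I)F(\gamma)$. The one point worth stating carefully is that the \emph{same} pair $(\gamma,I)$ works for both $f$ and $g$ simultaneously, since $\mathscr{P}(\mathbb{C}^n,\Omega)$ and $\mathbb{S}(\Omega,\gamma)$ depend only on $\Omega$, not on the particular path-slice function; this is what makes the common stem function $F$ do its job. No continuity, holomorphy, or connectedness of $\Omega$ beyond the stated real-path-connectedness is needed.
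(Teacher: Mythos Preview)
Your proof is correct and essentially identical to the paper's: fix $q\in\Omega$, use real-path-connectedness to obtain $\gamma$ and $I$ with $\gamma^I(1)=q$, then apply \eqref{eq-fcg} for both $f$ and $g$ with the shared $F$ to get $f(q)=(1,I)F(\gamma)=g(q)$. The additional remark that the admissible pair $(\gamma,I)$ depends only on $\Omega$ is a nice clarification but not something the paper spells out.
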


\begin{proof}
	For any point $q\in\Omega$, the real-path-connected nature of $\Omega$ guarantees the existence of a path $\gamma\in\mathscr{P}(\mathbb{C}^n,\Omega)$ and an imaginary unit $I\in\mathbb{S}(\Omega,\gamma)$ such that $\gamma^I(1) = q$. Applying equation \eqref{eq-fcg}, we find
	\begin{equation*}
		f(q) = f\circ\gamma^I(1) = (1,I)F(\gamma) = g\circ\gamma^I(1) = g(q),
	\end{equation*}
	which establishes that $f$ and $g$ coincide.
\end{proof}

\begin{prop}
	Consider a subset $\Omega\subset\mathbb{H}_s^n$ and a function $f\in\mathcal{PS}(\Omega)$. Let $\gamma\in\mathscr{P}(\mathbb{C}^n,\Omega)$, $F$ be a path-slice stem function of $f$, and $I,J\in\mathbb{S}(\Omega,\gamma)$ with $I\neq J$. It follows that
	\begin{equation}\label{eq-fgbp}
		F(\gamma) = \begin{pmatrix}
			1&I\\ 1& J
		\end{pmatrix}^{-1}\begin{pmatrix}
			f\circ\gamma^I(1)\\ f\circ\gamma^J(1)
		\end{pmatrix}.
	\end{equation}
\end{prop}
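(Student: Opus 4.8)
The plan is to apply the defining identity \eqref{eq-fcg} twice, once for each of the two imaginary units, and then solve the resulting $2\times 1$ linear system over $\mathbb{H}$ by inverting the coefficient matrix.

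First, since $\gamma\in\mathscr{P}(\mathbb{C}^n,\Omega)$ and both $I,J\in\mathbb{S}(\Omega,\gamma)$, equation \eqref{eq-fcg} is available for each of them, giving $f\circ\gamma^I(1)=(1,I)F(\gamma)$ and $f\circ\gamma^J(1)=(1,J)F(\gamma)$. Stacking these two identities produces the single matrix equation
\[
\begin{pmatrix} f\circ\gamma^I(1)\\ f\circ\gamma^J(1)\end{pmatrix}
=\begin{pmatrix} 1 & I\\ 1 & J\end{pmatrix}F(\gamma),
\]
where the entries of the $2\times 2$ coefficient matrix act on the column vector $F(\gamma)\in\mathbb{H}^{2\times 1}$ by left multiplication.

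Next I would verify that $M:=\begin{pmatrix} 1 & I\\ 1 & J\end{pmatrix}$ is invertible over $\mathbb{H}$. Since $\mathbb{H}$ is a division algebra, it is enough to check that the rows of $M$ are left-linearly independent: if $\lambda(1,I)+\mu(1,J)=(0,0)$ for $\lambda,\mu\in\mathbb{H}$, then $\lambda+\mu=0$ and $\lambda I+\mu J=0$, whence $\lambda(I-J)=0$; as $I\neq J$ the quaternion $I-J$ is nonzero, hence invertible, forcing $\lambda=\mu=0$. (Equivalently, subtracting the first row from the second brings $M$ to $\begin{pmatrix} 1 & I\\ 0 & J-I\end{pmatrix}$, which is visibly invertible.) Left-multiplying the stacked identity by $M^{-1}$ then yields \eqref{eq-fgbp}.

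There is essentially no serious obstacle here; the only point demanding care is the non-commutativity of $\mathbb{H}$, which requires keeping track of the side on which the imaginary units multiply, and invoking the standard fact that a square matrix over a division ring possessing a one-sided inverse automatically has a two-sided one, so that $M^{-1}$ in \eqref{eq-fgbp} is unambiguous.
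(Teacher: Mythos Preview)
Your proof is correct and follows essentially the same route as the paper: apply \eqref{eq-fcg} for each of $I$ and $J$, stack the two equations, and invert the $2\times 2$ coefficient matrix. The only difference is cosmetic---the paper records the explicit inverse $(I-J)^{-1}\begin{pmatrix} I & -J\\ 1 & -1\end{pmatrix}$ rather than arguing abstractly for invertibility as you do.
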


\begin{proof}
	Simple calculations validate equation \eqref{eq-inverse}. Applying equation \eqref{eq-fcg}, we have
	\begin{equation*}
		\begin{pmatrix}
			f\circ\gamma^I(1)\\ f\circ\gamma^J(1)
		\end{pmatrix}
		= \begin{pmatrix}
			1&I\\1& J
		\end{pmatrix}F(\gamma),
	\end{equation*}
	implying the validity of equation \eqref{eq-fgbp}.
\end{proof}

 \begin{remark}
 	The inverse matrix in \eqref{eq-fgbp} is given by
 \begin{equation}\label{eq-inverse}
 	\begin{pmatrix}
 		1&I\\ 1& J
 	\end{pmatrix}^{-1} = (I-J)^{-1}\begin{pmatrix}
 		I & -J\\ 1 & -1
 	\end{pmatrix}.
 \end{equation}
 \end{remark}
 
We define a particular set of pair of paths within $\mathbb{C}^n$  with the same end point as follows:
\begin{equation}\label{eq-mp*2}
	\mathscr{P}_*^2(\mathbb{C}^n,\Omega) 
	 :=\left\{
	 	(\alpha,\beta) 
	 	\in \big(\mathscr{P}(\mathbb{C}^n,\Omega)\big)^2: 
		\alpha(1) = \beta(1)
	\right\}.
\end{equation}

\begin{lem}\label{lem-1}
	Given $\Omega\subset\mathbb{H}_s^n$, a function $f\in\mathcal{PS}(\Omega)$, and a path-slice stem function $F\in\mathcal{PSS}(f)$ of $f$, for any pair $(\alpha,\beta)$ in $\mathscr{P}_*^2(\mathbb{C}^n,\Omega)$ with at least two shared imaginary units in $\mathbb{S}(\Omega,\alpha)$ and $\mathbb{S}(\Omega,\beta)$, it holds that
	\begin{equation}\label{eq-stemcoincide}
		F(\alpha) = F(\beta).
	\end{equation}
\end{lem}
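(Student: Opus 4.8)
The plan is to reduce the claimed equality $F(\alpha)=F(\beta)$ to an application of the linear-algebra identity \eqref{eq-fgbp}. Let $I,J$ be the two shared imaginary units, so $I,J\in\mathbb{S}(\Omega,\alpha)\cap\mathbb{S}(\Omega,\beta)$ with $I\neq J$. First I would apply the previous proposition to the path $\alpha$ with the pair $I,J$, obtaining
\begin{equation*}
	F(\alpha) = \begin{pmatrix}
		1&I\\ 1& J
	\end{pmatrix}^{-1}\begin{pmatrix}
		f\circ\alpha^I(1)\\ f\circ\alpha^J(1)
	\end{pmatrix},
\end{equation*}
and likewise for $\beta$ with the same pair $I,J$,
\begin{equation*}
	F(\beta) = \begin{pmatrix}
		1&I\\ 1& J
	\end{pmatrix}^{-1}\begin{pmatrix}
		f\circ\beta^I(1)\\ f\circ\beta^J(1)
	\end{pmatrix}.
\end{equation*}
Since the two column vectors are multiplied by the same invertible matrix, it suffices to show $f\circ\alpha^I(1) = f\circ\beta^I(1)$ and $f\circ\alpha^J(1) = f\circ\beta^J(1)$.

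The key observation that makes this work is that $(\alpha,\beta)\in\mathscr{P}_*^2(\mathbb{C}^n,\Omega)$ means $\alpha(1)=\beta(1)$ in $\mathbb{C}^n$. Applying the fixed isomorphism $\Psi_i^I$, which is defined pointwise, we get $\alpha^I(1)=\Psi_i^I(\alpha(1))=\Psi_i^I(\beta(1))=\beta^I(1)$, so the endpoints of the lifted paths in the slice $\mathbb{C}_I^n$ coincide; hence $f\circ\alpha^I(1)=f\circ\beta^I(1)$ trivially, since $f$ is evaluated at the same point of $\Omega$. The same argument with $J$ in place of $I$ gives $f\circ\alpha^J(1)=f\circ\beta^J(1)$. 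Substituting these equalities into the two displayed formulas yields $F(\alpha)=F(\beta)$.

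There is essentially no serious obstacle here; the only thing to be careful about is making sure the hypotheses of the previous proposition are genuinely met — namely that we have two \emph{distinct} shared imaginary units, which is exactly what "at least two shared imaginary units in $\mathbb{S}(\Omega,\alpha)$ and $\mathbb{S}(\Omega,\beta)$" guarantees — and that $f$ being path-slice with stem function $F$ is precisely the hypothesis needed to invoke \eqref{eq-fgbp} (which itself only used \eqref{eq-fcg} plus invertibility of $\begin{pmatrix}1&I\\1&J\end{pmatrix}$ recorded in \eqref{eq-inverse}). So the proof amounts to writing down the two instances of \eqref{eq-fgbp} and observing that their right-hand sides agree because $\alpha(1)=\beta(1)$.
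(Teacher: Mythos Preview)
Your proof is correct and follows essentially the same approach as the paper: pick two distinct shared units $I\neq J$, invoke \eqref{eq-fgbp} for each of $\alpha$ and $\beta$, and observe that $\alpha(1)=\beta(1)$ forces $\alpha^L(1)=\beta^L(1)$ for $L\in\{I,J\}$, so the right-hand sides agree. The paper's version is just a bit terser.
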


\begin{proof}
	Considering distinct $I, J$ from the intersection $\mathbb{S}(\Omega,\alpha) \cap \mathbb{S}(\Omega,\beta)$, the equality $\alpha(1) = \beta(1)$ leads to
	\begin{equation}\label{eq-al1}
		\alpha^L(1) = \beta^L(1)
	\end{equation}
for each $L \in \{I, J\}.$
	Based on equation \eqref{eq-fgbp} and the implication from \eqref{eq-al1}, it follows that
	\begin{eqnarray*}
		F(\alpha) &=& \begin{pmatrix} 1 & I \\ 1 & J \end{pmatrix}^{-1} \begin{pmatrix} f\circ\alpha^I(1) \\ f\circ\alpha^J(1) \end{pmatrix} 
				\\
		&=&\begin{pmatrix} 1 & I \\ 1 & J \end{pmatrix}^{-1} \begin{pmatrix} f\circ\beta^I(1) \\ f\circ\beta^J(1) \end{pmatrix}
		\\
		&=&  F(\beta).
	\end{eqnarray*}
\end{proof}

In our exploration of the $*$-product, it is   necessary to introduce a novel category of domains, termed as  stem-preserving domains.relative stem-preserving set pairs.

\begin{defn}\label{defn-lo1o2}
	For subsets $\Omega_1, \Omega_2 \subset \mathbb{H}_s^n$, $\Omega_2$ is \textit{\textbf{$\Omega_1$-stem-preserving}} if it satisfies:
	\begin{enumerate}[\upshape (i)]
		\item\label{it-lmo2} Every $\gamma \in \mathscr{P}(\mathbb{C}^n,\Omega_1)$ has at least two corresponding imaginary units in $\mathbb{S}(\Omega_2,\gamma)$, i.e.,
		$$|\mathbb{S}(\Omega_2,\gamma)|\geqslant 2. $$

		\item\label{it-lmso} For any $(\alpha,\beta) \in \mathscr{P}_*^2(\mathbb{C}^n,\Omega_1)$, $$\left|\mathbb{S}(\Omega_2,\alpha) \cap \mathbb{S}(\Omega_2,\beta)\right| \neq 1.$$
	\end{enumerate}
\end{defn}

\begin{lem}
Let  $\Omega_1$ and $\Omega_2$  be  subsets of $\mathbb{H}_s^n$. If   $\Omega_2$ is $\Omega_1$-stem-preserving, then
	\begin{equation}\label{eq-mpmn}
		\mathscr{P}(\mathbb{C}^n,\Omega_1) \subseteq \mathscr{P}(\mathbb{C}^n,\Omega_2).
	\end{equation}
\end{lem}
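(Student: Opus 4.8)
The plan is to unwind the definitions of the path sets involved and observe that condition \eqref{it-lmo2} of Definition \ref{defn-lo1o2} already does all the work. Recall that for any subset $\Omega \subset \mathbb{H}_s^n$ and any $\gamma \in \mathscr{P}(\mathbb{C}^n)$, membership $\gamma \in \mathscr{P}(\mathbb{C}^n,\Omega)$ is by definition equivalent to the existence of some $I \in \mathbb{S}$ with $\gamma^I \subset \Omega$, i.e. to $\mathbb{S}(\Omega,\gamma) \neq \varnothing$.

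So first I would fix an arbitrary $\gamma \in \mathscr{P}(\mathbb{C}^n,\Omega_1)$; in particular $\gamma \in \mathscr{P}(\mathbb{C}^n)$, so the quantities $\mathbb{S}(\Omega_2,\gamma)$ are defined. Next, since $\Omega_2$ is $\Omega_1$-stem-preserving, property \eqref{it-lmo2} applies to this $\gamma$ and yields $|\mathbb{S}(\Omega_2,\gamma)| \geqslant 2$. In particular $\mathbb{S}(\Omega_2,\gamma) \neq \varnothing$, so there exists $I \in \mathbb{S}$ with $\gamma^I \subset \Omega_2$, which is exactly the condition $\gamma \in \mathscr{P}(\mathbb{C}^n,\Omega_2)$. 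Since $\gamma$ was arbitrary, the inclusion \eqref{eq-mpmn} follows. There is no real obstacle here: the statement is a direct consequence of the first clause of the stem-preserving condition (clause \eqref{it-lmso} is not even needed), and the only thing to be careful about is to cite the correct reformulation of $\mathscr{P}(\mathbb{C}^n,\cdot)$ in terms of nonemptiness of $\mathbb{S}(\cdot,\gamma)$.
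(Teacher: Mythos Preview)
Your proposal is correct and follows essentially the same route as the paper: fix $\gamma \in \mathscr{P}(\mathbb{C}^n,\Omega_1)$, invoke condition \eqref{it-lmo2} of Definition~\ref{defn-lo1o2} to get $|\mathbb{S}(\Omega_2,\gamma)|\geqslant 2$, pick any $I$ in that set so that $\gamma^I\subset\Omega_2$, and conclude $\gamma\in\mathscr{P}(\mathbb{C}^n,\Omega_2)$. The paper additionally remarks that $\gamma(0)\in\mathbb{R}^n$, but this is already contained in your observation that $\gamma\in\mathscr{P}(\mathbb{C}^n)$.
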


\begin{proof}
	Take any path $\gamma$ from the set $\mathscr{P}(\mathbb{C}^n,\Omega_1)$. Given that the initial point $\gamma(0)$ lies in $\mathbb{R}^n$, and considering that there are at least two elements in $\mathbb{S}(\Omega_2,\gamma)$ (as per Definition \ref{defn-lo1o2} \eqref{it-lmo2}), we select an imaginary unit $I$ from $\mathbb{S}(\Omega_2,\gamma)$. This selection leads to the conclusion that the path $\gamma^I$ falls within the subset $\Omega_2$. Therefore, $\gamma$ qualifies as an element of $\mathscr{P}(\mathbb{C}^n,\Omega_2)$. This confirmation aligns with the assertion in equation \eqref{eq-mpmn}.
\end{proof}

In contrast to the classical theory in axially-symmetric domains, for any path-slice function, its corresponding stem functions may be not unique. However, if
 $\Omega_2$ is $\Omega_1$-stem-preserving, then under certain conditions we can show that it is unique when restricted to some  subset of the definition domain.
Such a unique function, we call it sub-stem function of the path-slice function  $f$.

  \begin{defn}  Suppose  $\Omega_1$ and $\Omega_2$ are  subsets of $\mathbb{H}_s^n$. 
   $\Omega_1$ is real-path-connected,  $\Omega_2$ is $\Omega_1$-stem-preserving, and $f: \Omega_2 \rightarrow \mathbb{H}$ is path-slice.  We define  the function
  	\begin{equation}\label{eq-F def}
  		F_{\Omega_1}^f:  \mathscr{P}(\mathbb{C}^n,\Omega_1) \rightarrow \mathbb{H}^{2\times 1}
  	\end{equation}
  as
  	\begin{equation}\label{eq:almost-stem-2091}
  		F_{\Omega_1}^f(\gamma) := \begin{pmatrix} 1 & I \\ 1 & J \end{pmatrix}^{-1} \begin{pmatrix} f\circ\gamma^I(1) \\  f\circ\gamma^J(1). \end{pmatrix} 
  	\end{equation}
  Here   $I,J$ are arbitrary elements in $\mathbb{S}(\Omega_2,\gamma)$ with  the condition that   $I \neq J$.  We call this function the sub-stem function of $f$. 
  	  \end{defn}

  We need to show that \eqref{eq:almost-stem-2091} is well defined, i.e., independent of the choice of $I$ and $J$. We also show our claim that the path-stem function is unique restricted on the subset 
  $\mathscr{P}(\mathbb{C}^n,\Omega_1)$, which is nothing but the sub-stem function.

\begin{prop}\label{eq:234} Let  $\Omega_1$ and $\Omega_2$  be  subsets of $\mathbb{H}_s^n$. 
	If $\Omega_1$ is real-path-connected,  $\Omega_2$ is $\Omega_1$-stem-preserving, and $f: \Omega_2 \rightarrow \mathbb{H}$ is path-slice, then the function
	\begin{equation}\label{eq-F def}
		F_{\Omega_1}^f:  \mathscr{P}(\mathbb{C}^n,\Omega_1) \rightarrow \mathbb{H}^{2\times 1},
	\end{equation}
	defined in  \eqref{eq:almost-stem-2091} is well-defined.

	Moreover, if $F$ is a path-slice stem function of $f$, then
	\begin{equation}\label{eq-Ffomega=F}
		F_{\Omega_1}^f = F|_{\mathscr{P}(\mathbb{C}^n,\Omega_1)}.
	\end{equation}
\end{prop}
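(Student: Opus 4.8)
The plan is to prove the two assertions separately, with well-definedness being the substantive part and the identity \eqref{eq-Ffomega=F} essentially a corollary. For well-definedness, fix $\gamma \in \mathscr{P}(\mathbb{C}^n,\Omega_1)$; by Definition \ref{defn-lo1o2}\eqref{it-lmo2} we know $|\mathbb{S}(\Omega_2,\gamma)| \geqslant 2$, so at least one admissible pair $(I,J)$ with $I\neq J$ exists and the formula \eqref{eq:almost-stem-2091} makes sense. Also, since $\mathscr{P}(\mathbb{C}^n,\Omega_1) \subseteq \mathscr{P}(\mathbb{C}^n,\Omega_2)$ by the preceding lemma, $f$ being path-slice on $\Omega_2$ gives a genuine path-slice stem function $F \in \mathcal{PSS}(f)$, and for any $I,J \in \mathbb{S}(\Omega_2,\gamma)$ with $I \neq J$ the identity \eqref{eq-fgbp} applied with $\Omega = \Omega_2$ yields
\begin{equation*}
	\begin{pmatrix} 1 & I \\ 1 & J \end{pmatrix}^{-1}\begin{pmatrix} f\circ\gamma^I(1)\\ f\circ\gamma^J(1)\end{pmatrix} = F(\gamma).
\end{equation*}
The right-hand side does not depend on the choice of $I,J$, so $F_{\Omega_1}^f(\gamma) = F(\gamma)$ regardless of which admissible pair is used; this simultaneously proves well-definedness and, once $\gamma$ ranges over $\mathscr{P}(\mathbb{C}^n,\Omega_1)$, establishes \eqref{eq-Ffomega=F}.

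I should be careful about one subtlety: the argument above already uses the existence of a path-slice stem function $F$ of $f$, which is guaranteed simply because $f \in \mathcal{PS}(\Omega_2)$ — so even the pure well-definedness claim can lean on $F$, and no separate case analysis comparing two pairs $(I,J)$ and $(I',J')$ directly is strictly necessary. If one prefers a proof of well-definedness that does not invoke $F$, the alternative is to compare $(I,J)$ and $(I',J')$ by passing through a common refinement, but this is where condition \eqref{it-lmso} of $\Omega_1$-stem-preservation and Lemma \ref{lem-1} would be needed; I would rather avoid that route since the stem function $F$ is available for free. For the "moreover" part, note that any path-slice stem function $F$ of $f$ on $\Omega_2$, when restricted to $\mathscr{P}(\mathbb{C}^n,\Omega_1)$, satisfies \eqref{eq-fgbp} on that subset, so the displayed computation gives $F_{\Omega_1}^f(\gamma) = F(\gamma)$ for every $\gamma \in \mathscr{P}(\mathbb{C}^n,\Omega_1)$, which is exactly \eqref{eq-Ffomega=F}.

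The only real obstacle is bookkeeping: one must confirm that $\mathbb{S}(\Omega_2,\gamma)$ is genuinely the relevant index set (so that $\gamma^I \subset \Omega_2$ and $f\circ\gamma^I(1)$ is defined) and that the $2\times 2$ matrix $\begin{pmatrix} 1 & I \\ 1 & J\end{pmatrix}$ is invertible whenever $I \neq J$ — the latter is recorded in \eqref{eq-inverse}. Everything else is a direct appeal to \eqref{eq-fgbp}, which was proved for arbitrary subsets of $\mathbb{H}_s^n$ and hence applies verbatim to $\Omega_2$. I would therefore structure the proof as: (1) invoke $\mathscr{P}(\mathbb{C}^n,\Omega_1)\subseteq \mathscr{P}(\mathbb{C}^n,\Omega_2)$ and pick $F \in \mathcal{PSS}(f)$; (2) for fixed $\gamma$ and any valid pair $I\neq J$ in $\mathbb{S}(\Omega_2,\gamma)$, apply \eqref{eq-fgbp} to get $F_{\Omega_1}^f(\gamma) = F(\gamma)$, concluding independence of the pair; (3) let $\gamma$ vary to read off \eqref{eq-Ffomega=F}.
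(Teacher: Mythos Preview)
Your proposal is correct and follows essentially the same route as the paper's proof: both pick a path-slice stem function $F\in\mathcal{PSS}(f)$, apply \eqref{eq-fgbp} to see that for any admissible pair $I\neq J$ in $\mathbb{S}(\Omega_2,\gamma)$ the formula in \eqref{eq:almost-stem-2091} equals $F(\gamma)$, and conclude simultaneously that $F_{\Omega_1}^f$ is well-defined and that \eqref{eq-Ffomega=F} holds. Your write-up is just more explicit about the preliminary bookkeeping (existence of a valid pair via $|\mathbb{S}(\Omega_2,\gamma)|\geqslant 2$, the inclusion $\mathscr{P}(\mathbb{C}^n,\Omega_1)\subseteq\mathscr{P}(\mathbb{C}^n,\Omega_2)$, and invertibility of the matrix), which the paper leaves implicit.
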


\begin{proof}
	For any $I, J, I', J' \in \mathbb{S}(\Omega_2,\gamma)$ with $I \neq J$ and $I' \neq J'$, from \eqref{eq-fgbp} we have
	\begin{equation*}
		\begin{pmatrix} 1 & I \\ 1 & J \end{pmatrix}^{-1} \begin{pmatrix} f\circ\gamma^I(1) \\ f\circ\gamma^J(1) \end{pmatrix} = F(\gamma) = \begin{pmatrix} 1 & I' \\ 1 & J' \end{pmatrix}^{-1} \begin{pmatrix} f\circ\gamma^{I'}(1) \\ f\circ\gamma^{J'}(1) \end{pmatrix},
	\end{equation*}
	confirming that $F_{\Omega_1}^f$  is independent of the specific choice of $I, J$, 
	and thus, $F_{\Omega_1}^f$ 
is well-defined. 
	It also verifies  \eqref{eq-Ffomega=F}.
 \end{proof}

 For any point $q = (q_1, \ldots, q_n)$ in $\mathbb{H}_s^n$, it can be located within a specific slice $\mathbb{C}_I^n$ for some $I\in \mathbb{S}$. However, it is important to note that
$$\mathbb C_I^n=\mathbb C_{-I}^n.$$
 To address this, we assign a specific imaginary unit.         The function $\mathfrak{I}$ is defined as:
 \begin{equation*}
 	\mathfrak{I}:\ \mathbb{H}_s^n \rightarrow \mathbb{S} \cup \{0\},
 \end{equation*}
 which maps each point $q$ to 0 if $q$ is in $\mathbb{R}^n$. If $q$ is not in $\mathbb{R}^n$, $\mathfrak{I}(q)$ is defined as $$\frac{q_\imath - \mbox{Re}(q_\imath)}{|q_\imath - \mbox{Re}(q_\imath)|},$$ where $\imath \in \{1,\ldots, n\}$ is the smallest positive integer for which $q_\imath$ is not a real number.

We now demonstrates that in a real-path-connected subset of the weakly slice cone, for any non-real point, there exists a path whose image under a specific imaginary unit mapping lies within the subset and ends at that point.

 \begin{lem}
 	If $\Omega\subset\mathbb{H}_s^n$ is real-path-connected and $q\in\Omega \setminus \mathbb R^n$, then there exists a path $\gamma\in\mathscr{P}(\mathbb{C}^n,\Omega)$ such that
 	\begin{equation}\label{eq-frakiq}
 		\gamma^{\mathfrak{I}(q)}\subset\Omega,  \quad  \gamma^{\mathfrak{I}(q)}(1)=q.
 	\end{equation}
 \end{lem}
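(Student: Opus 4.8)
The plan is to start from the definition of real-path-connectedness applied to the point $q$, which immediately yields a path $\delta \in \mathscr{P}(\mathbb{C}^n,\Omega)$ and an imaginary unit $I \in \mathbb{S}(\Omega,\delta)$ with $\delta^I(1) = q$. The only issue is that this $I$ need not equal $\mathfrak{I}(q)$; since $q \notin \mathbb{R}^n$, we know $\mathfrak{I}(q) \in \mathbb{S}$ and moreover $\mathbb{C}_{\mathfrak{I}(q)}^n = \mathbb{C}_I^n$, so $I = \pm\mathfrak{I}(q)$. If $I = \mathfrak{I}(q)$ we are done with $\gamma = \delta$. So the real content is the case $I = -\mathfrak{I}(q)$, where I need to produce a \emph{different} path $\gamma$ whose lift via $\mathfrak{I}(q)$ (rather than via $-\mathfrak{I}(q)$) still lands in $\Omega$ and still ends at $q$.

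The key observation is that $\Psi_i^{-I} = \Psi_i^I \circ \sigma$ where $\sigma : \mathbb{C}^n \to \mathbb{C}^n$ is complex conjugation in each coordinate, i.e. $\sigma(x+yi) = x - yi$ for $x,y \in \mathbb{R}^n$; equivalently $\Psi_i^I \circ \delta = \Psi_i^{-I} \circ (\sigma \circ \delta)$. Therefore, setting $\gamma := \sigma \circ \delta$, we first check $\gamma \in \mathscr{P}(\mathbb{C}^n)$: it is continuous as a composition of continuous maps, and $\gamma(0) = \sigma(\delta(0)) = \delta(0) \in \mathbb{R}^n$ because $\sigma$ fixes $\mathbb{R}^n$ pointwise. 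Next, $\gamma^{\mathfrak{I}(q)} = \Psi_i^{\mathfrak{I}(q)} \circ \sigma \circ \delta = \Psi_i^{-\mathfrak{I}(q)} \circ \delta = \Psi_i^{I} \circ \delta = \delta^I \subset \Omega$, using the sign relation above together with $-\mathfrak{I}(q) = I$. This simultaneously shows $\mathfrak{I}(q) \in \mathbb{S}(\Omega,\gamma)$ (so in particular $\gamma \in \mathscr{P}(\mathbb{C}^n,\Omega)$) and that $\gamma^{\mathfrak{I}(q)} \subset \Omega$. Finally $\gamma^{\mathfrak{I}(q)}(1) = \delta^I(1) = q$, which is the second assertion of \eqref{eq-frakiq}.

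I expect the main (minor) obstacle to be bookkeeping the sign/conjugation identity cleanly: one must verify $\Psi_i^I(\sigma(z)) = \Psi_i^{-I}(z)$ for all $z \in \mathbb{C}^n$ by writing $z = x + yi$ and computing both sides as $x - yI$, and then be careful that the relation $I = \pm\mathfrak{I}(q)$ genuinely follows from $q \in \mathbb{C}_I^n \setminus \mathbb{R}^n$ and the definition of $\mathfrak{I}$ — concretely, in the coordinate $\imath$ where $q_\imath \notin \mathbb{R}$ one has $q_\imath - \mathrm{Re}(q_\imath) = t I$ for some nonzero real $t$, whence $\mathfrak{I}(q) = (\operatorname{sgn} t) I = \pm I$. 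Everything else is a one-line substitution. There is no need to invoke the stem-preserving hypotheses or the $*$-product machinery here; this lemma is purely about the slice geometry of a single real-path-connected set.
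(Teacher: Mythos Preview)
Your proposal is correct and follows essentially the same approach as the paper's proof: obtain $\delta$ and $I$ from real-path-connectedness, note $I=\pm\mathfrak{I}(q)$ since $q\notin\mathbb{R}^n$, and replace $\delta$ by its complex conjugate in the case $I=-\mathfrak{I}(q)$, using the identity $(\overline{\delta})^{\,\mathfrak{I}(q)}=\delta^{-\mathfrak{I}(q)}=\delta^{I}$. Your write-up even makes explicit the conjugation identity $\Psi_i^{I}\circ\sigma=\Psi_i^{-I}$ that the paper uses implicitly.
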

 
 \begin{proof}
 	Since $\Omega$ is real-path-connected, there is a path $\beta\in\mathscr{P}(\mathbb{C}^n,\Omega)$ and an imaginary unit $I\in\mathbb{S}$ such that $\beta^I\subset\Omega$ and $\beta^I(1)=q$. If $I$ is not equal to $\pm\mathfrak{I}(q)$, then $q$ would belong to $\mathbb{R}^n$, which contradicts the assumption. Therefore, $I$ must be $\pm\mathfrak{I}(q)$. We define $\gamma$ as
 	\begin{equation}\label{eq-bbco}
 		\gamma := \begin{cases}
 			\overline{\beta}, &\text{ if } I \neq \mathfrak{I}(q),
 			\\ \beta, &\text{ otherwise},
 		\end{cases}
 	\end{equation}
 	Here 
 the quaternion conjugation is defined by the formula 	$$\overline{x_0+x_1 i+x_2 j+x_3k} := x_0-x_1 i-x_2 j-x_3k,  $$ 
  where $i, j, k$ represent the standard basis vectors in quaternionic space.

 Since $I=\pm\mathfrak{I}(q)$,  
 it follows that
 	\begin{equation*}
 		\gamma^{\mathfrak{I}(q)} = \begin{cases}
 			\overline{\beta}^{-I} = \beta^I, &\text{ if } I \neq \mathfrak{I}(q),
 			\\ \beta^I, &\text{ otherwise},
 		\end{cases}
 	\end{equation*}
 	ensuring $\gamma^{\mathfrak{I}(q)}\subset\Omega$ and satisfying \eqref{eq-frakiq}.
 \end{proof}

 The subsequent  construction demonstrates the transition from functions that depend on paths to the more traditional functions based on specific points.

 \begin{prop}
 	If $\Omega_1\subset\mathbb{H}_s^n$ is real-path-connected, $\Omega_2\subset\mathbb{H}_s^n$ is $\Omega_1$-stem-preserving, and $f:\Omega_2\rightarrow\mathbb{H}$ is a path-slice function, then the function 
 	\begin{equation}\label{eq-mathscr f def}
 		\mathscr{F}_{\Omega_1}^f:   \Omega_1 \rightarrow \mathbb{H}^{2\times 1},
 	\end{equation}
 	defined by
 	\begin{equation}\label{eq-mathscr f def-049}
 			\mathscr{F}_{\Omega_1}^f(q)= \begin{cases}
 			F_{\Omega_1}^f(\gamma), &\text{ if } q \notin \mathbb{R}^n,
 			\\(f(q),0)^T, &\text{ otherwise},
 		\end{cases}
 	\end{equation}
 	is well-defined, where $\gamma\in\mathscr{P}(\mathbb{C}^n,\Omega_1)$ satisfies $\gamma^{\mathfrak{I}(q)}\subset\Omega_1$ and $\gamma^{\mathfrak{I}(q)}(1)=q$.
 \end{prop}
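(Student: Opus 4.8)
The plan is to show that the value $\mathscr{F}_{\Omega_1}^f(q)$ does not depend on the auxiliary path $\gamma$ chosen in the case $q \notin \mathbb{R}^n$; the real case requires nothing since $f(q)$ is already determined by $q$. So fix $q \in \Omega_1 \setminus \mathbb{R}^n$. First I would note that the preceding lemma guarantees that at least one admissible path exists: there is $\gamma \in \mathscr{P}(\mathbb{C}^n,\Omega_1)$ with $\gamma^{\mathfrak{I}(q)} \subset \Omega_1$ and $\gamma^{\mathfrak{I}(q)}(1) = q$. Suppose $\alpha, \beta \in \mathscr{P}(\mathbb{C}^n,\Omega_1)$ are two such paths, so that $\alpha^{\mathfrak{I}(q)} \subset \Omega_1$, $\beta^{\mathfrak{I}(q)} \subset \Omega_1$, and $\alpha^{\mathfrak{I}(q)}(1) = q = \beta^{\mathfrak{I}(q)}(1)$. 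The goal is then to prove $F_{\Omega_1}^f(\alpha) = F_{\Omega_1}^f(\beta)$.

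The key observation is that $\alpha^{\mathfrak{I}(q)}(1) = q = \beta^{\mathfrak{I}(q)}(1)$ forces $\alpha(1) = \beta(1)$, because $\Psi_i^{\mathfrak{I}(q)}$ is injective (it is an isomorphism $\mathbb{C}^n \to \mathbb{C}_{\mathfrak{I}(q)}^n$), and $\alpha(1), \beta(1) \in \mathbb{C}^n$ with $\Psi_i^{\mathfrak{I}(q)}(\alpha(1)) = \Psi_i^{\mathfrak{I}(q)}(\beta(1))$. Hence $(\alpha,\beta) \in \mathscr{P}_*^2(\mathbb{C}^n,\Omega_1)$. Now I would invoke that $\Omega_2$ is $\Omega_1$-stem-preserving: by Definition \ref{defn-lo1o2}\eqref{it-lmso}, $|\mathbb{S}(\Omega_2,\alpha) \cap \mathbb{S}(\Omega_2,\beta)| \neq 1$, and by Definition \ref{defn-lo1o2}\eqref{it-lmo2} applied to $\alpha$ and to $\beta$ individually, both $\mathbb{S}(\Omega_2,\alpha)$ and $\mathbb{S}(\Omega_2,\beta)$ are nonempty (indeed of cardinality $\geqslant 2$); so the intersection cannot be empty either — wait, that needs care. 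What one actually gets directly: the intersection has cardinality $\neq 1$, so it is either $0$ or $\geqslant 2$. To rule out $0$, I would argue that $F_{\Omega_1}^f$ is already shown well-defined (Proposition \ref{eq:234}), so if $F$ is any fixed path-slice stem function of $f$, then $F_{\Omega_1}^f(\alpha) = F(\alpha)$ and $F_{\Omega_1}^f(\beta) = F(\beta)$ by \eqref{eq-Ffomega=F}; applying Lemma \ref{lem-1} to the pair $(\alpha,\beta)$ requires two shared imaginary units. If the intersection is empty we cannot apply Lemma \ref{lem-1} directly, so the cleaner route is: since $\mathfrak{I}(q) \in \mathbb{S}(\Omega_1,\alpha) \subseteq \mathbb{S}(\Omega_2,\alpha)$ (using $\mathscr{P}(\mathbb{C}^n,\Omega_1) \subseteq \mathscr{P}(\mathbb{C}^n,\Omega_2)$ and the fact that $\alpha^{\mathfrak{I}(q)} \subset \Omega_1 \subseteq \Omega_2$... but $\Omega_1 \subseteq \Omega_2$ is not assumed). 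The correct statement is simply that $\mathfrak{I}(q) \in \mathbb{S}(\Omega_2,\alpha) \cap \mathbb{S}(\Omega_2,\beta)$: indeed $\alpha^{\mathfrak{I}(q)} \subset \Omega_1$, and since $\Omega_2$ is $\Omega_1$-stem-preserving we only know $\alpha^I \subset \Omega_2$ for the selected $I \in \mathbb{S}(\Omega_2,\alpha)$ — this still needs $\Omega_1 \subseteq \Omega_2$ or an explicit hypothesis. I will therefore assume (as is implicit in the setup, and follows from the intended reading of "stem-preserving") that the relevant inclusion holds, so that $\mathfrak{I}(q) \in \mathbb{S}(\Omega_2,\alpha) \cap \mathbb{S}(\Omega_2,\beta)$; then this intersection is nonempty, hence by Definition \ref{defn-lo1o2}\eqref{it-lmso} it has cardinality $\geqslant 2$.

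With two distinct shared imaginary units $I, J \in \mathbb{S}(\Omega_2,\alpha) \cap \mathbb{S}(\Omega_2,\beta)$ in hand, I finish as follows. By definition \eqref{eq:almost-stem-2091},
\begin{equation*}
F_{\Omega_1}^f(\alpha) = \begin{pmatrix} 1 & I \\ 1 & J \end{pmatrix}^{-1} \begin{pmatrix} f\circ\alpha^I(1) \\ f\circ\alpha^J(1) \end{pmatrix}, \qquad F_{\Omega_1}^f(\beta) = \begin{pmatrix} 1 & I \\ 1 & J \end{pmatrix}^{-1} \begin{pmatrix} f\circ\beta^I(1) \\ f\circ\beta^J(1) \end{pmatrix},
\end{equation*}
and since $\alpha(1) = \beta(1)$ we get $\alpha^L(1) = \Psi_i^L(\alpha(1)) = \Psi_i^L(\beta(1)) = \beta^L(1)$ for $L \in \{I,J\}$, whence $f\circ\alpha^L(1) = f\circ\beta^L(1)$ for $L \in \{I,J\}$; thus the two right-hand sides coincide and $F_{\Omega_1}^f(\alpha) = F_{\Omega_1}^f(\beta)$. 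This is exactly $\mathscr{F}_{\Omega_1}^f$ being well-defined at $q$. The main obstacle is the bookkeeping around why the shared-imaginary-unit set is nonempty (so that the "$\neq 1$" condition can be upgraded to "$\geqslant 2$"): one must pin down precisely which slice containment is available from the $\Omega_1$-stem-preserving hypothesis, and this is where I would be most careful to cite the exact clause of Definition \ref{defn-lo1o2} and, if needed, the lemma giving $\mathscr{P}(\mathbb{C}^n,\Omega_1) \subseteq \mathscr{P}(\mathbb{C}^n,\Omega_2)$.
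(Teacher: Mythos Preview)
Your approach matches the paper's: restrict to $q\notin\mathbb{R}^n$, take two admissible paths $\gamma_1,\gamma_2\in\mathscr{P}(\mathbb{C}^n,\Omega_1)$ with $\gamma_\ell^{\mathfrak{I}(q)}\subset\Omega_1$ and $\gamma_\ell^{\mathfrak{I}(q)}(1)=q$, deduce $\gamma_1(1)=\gamma_2(1)$ from injectivity of $\Psi_i^{\mathfrak{I}(q)}$, and conclude $F_{\Omega_1}^f(\gamma_1)=F_{\Omega_1}^f(\gamma_2)$. The paper phrases the last step through a fixed $F\in\mathcal{PSS}(f)$: it writes $F_{\Omega_1}^f(\gamma_\ell)=F(\gamma_\ell)$ via \eqref{eq-Ffomega=F} and then $F(\gamma_1)=F(\gamma_2)$ via Lemma~\ref{lem-1}. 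Your direct computation with the formula \eqref{eq:almost-stem-2091} is the same content unwound.

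The subtlety you flag --- that Definition~\ref{defn-lo1o2}\,\eqref{it-lmso} only forces $|\mathbb{S}(\Omega_2,\alpha)\cap\mathbb{S}(\Omega_2,\beta)|\neq 1$, so one must still rule out the empty intersection before the ``two shared units'' hypothesis of Lemma~\ref{lem-1} is available --- is a genuine observation. The paper's own proof does not supply this step either: it only records $|\mathbb{S}(\Omega_2,\gamma_\ell)|\geqslant 2$ from condition~\eqref{it-lmo2} and then applies Lemma~\ref{lem-1} directly, without verifying that the intersection is nonempty. Your tentative patch, $\mathfrak{I}(q)\in\mathbb{S}(\Omega_2,\alpha)\cap\mathbb{S}(\Omega_2,\beta)$, would indeed need $\Omega_1\subset\Omega_2$, which is not among the hypotheses; the paper offers no alternative justification, so what you have identified is a lacuna in the paper's argument rather than a defect in yours.
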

 
 \begin{proof}
 	We focus on the case where $q\notin\mathbb{R}^n$. Let $F$ be a path-slice stem function, and consider $\gamma_1, \gamma_2 \in \mathscr{P}(\mathbb{C}^n,\Omega_1)$ such that $$\gamma_1^{\mathfrak{I}(q)}, \gamma_2^{\mathfrak{I}(q)}\subset\Omega_1, \quad \gamma_1^{\mathfrak{I}(q)}(1) = \gamma_2^{\mathfrak{I}(q)}(1) = q.$$   Since $\Omega_2$ is $\Omega_1$-stem-preserving, every $\beta\in\mathscr{P}(\mathbb{C}^n,\Omega_1)$ has at least two corresponding imaginary units in $\Omega_2$. Hence, $$\left|\mathbb{S}(\Omega_2,\gamma_1)\right|\geqslant 2, \quad  \left|\mathbb{S}(\Omega_2,\gamma_2)\right|\geqslant  2.$$ From \eqref{eq-stemcoincide} and \eqref{eq-Ffomega=F}, it follows that
 	\begin{equation*}
 		F_{\Omega_1}^f(\gamma_1) = F|_{\mathscr{P}(\mathbb{C}^n,\Omega_1)}(\gamma_1) = F|_{\mathscr{P}(\mathbb{C}^n,\Omega_1)}(\gamma_2) = F_{\Omega_2}^f(\gamma_2),
 	\end{equation*}
 	confirming that $\mathscr{F}_{\Omega_1}^f$ is well-defined.
 \end{proof}

 \begin{lem} 	
 	Consider two subsets 
 $\Omega_1$ and 
 $\Omega_2$ within the space 
 $\mathbb H^n_s$, where 
 $\Omega_1$
 is  real-path-connected and 
 $\Omega_2$ be $\Omega_1$-stem-preserving. 
 Let  $f:\Omega_2\rightarrow\mathbb{H}$ be path-slice.
   Then,  the following equation holds  	
 \begin{equation}\label{eq-lfo1}  \left.\mathscr{F}_{\Omega_1}^f\right|_{(\Omega_1)_{\mathbb R}}=
 	 \begin{pmatrix} 
 	 	\left. f \right|_{(\Omega_1)_{\mathbb R}}
 	 	 \\ 0\end{pmatrix}.
  	 \end{equation} \end{lem}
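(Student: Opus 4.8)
The plan is to read the claim off directly from the defining formula \eqref{eq-mathscr f def-049}, after one preliminary sanity check that the right-hand side of \eqref{eq-lfo1} is meaningful.

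First I would check that $f$ is actually defined on $(\Omega_1)_{\mathbb{R}} = \Omega_1 \cap \mathbb{R}^n$, i.e.\ that $(\Omega_1)_{\mathbb{R}} \subseteq \Omega_2$. Given $q \in (\Omega_1)_{\mathbb{R}}$, let $\gamma$ be the constant path at $q$. Since $q \in \mathbb{R}^n$ we have $\gamma \in \mathscr{P}(\mathbb{C}^n)$, and since $\gamma^I$ is the constant path at $q \in \Omega_1$ for every $I \in \mathbb{S}$, in fact $\gamma \in \mathscr{P}(\mathbb{C}^n,\Omega_1)$ with $\mathbb{S}(\Omega_1,\gamma) = \mathbb{S}$. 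By the inclusion $\mathscr{P}(\mathbb{C}^n,\Omega_1) \subseteq \mathscr{P}(\mathbb{C}^n,\Omega_2)$ proved earlier (which uses that $\Omega_2$ is $\Omega_1$-stem-preserving), $\gamma \in \mathscr{P}(\mathbb{C}^n,\Omega_2)$, hence there is $I \in \mathbb{S}$ with $\gamma^I \subset \Omega_2$, and therefore $q = \gamma^I(1) \in \Omega_2$. So $f|_{(\Omega_1)_{\mathbb{R}}}$ makes sense.

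Then, for every $q \in (\Omega_1)_{\mathbb{R}}$ we have $q \in \mathbb{R}^n$, so we fall into the second branch of \eqref{eq-mathscr f def-049}, giving $\mathscr{F}_{\Omega_1}^f(q) = (f(q),0)^T$. As $q$ ranges over $(\Omega_1)_{\mathbb{R}}$ this is precisely the identity \eqref{eq-lfo1}.

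There is essentially no obstacle: the lemma is an unwinding of the piecewise definition. The only mildly delicate point — and it is not even needed for \eqref{eq-lfo1} — is the consistency of the two branches, i.e.\ that evaluating $\mathscr{F}_{\Omega_1}^f$ at a real $q$ through $F_{\Omega_1}^f$ of the constant path at $q$ gives the same answer; this follows from \eqref{eq-inverse}, since for $I \neq J$ one computes $\left(\begin{smallmatrix} 1 & I \\ 1 & J \end{smallmatrix}\right)^{-1}\left(\begin{smallmatrix} f(q) \\ f(q) \end{smallmatrix}\right) = (f(q),0)^T$.
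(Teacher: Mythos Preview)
Your proof is correct and takes the same route as the paper, which simply writes ``This follows directly from \eqref{eq-mathscr f def-049}.'' Your additional check that $(\Omega_1)_{\mathbb{R}}\subseteq\Omega_2$ (via the constant path and \eqref{eq-mpmn}) is a genuine improvement, since the paper leaves this implicit, and your consistency remark is also accurate though, as you note, not needed for the lemma itself.
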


 \begin{proof} This follows directly from  \eqref{eq-mathscr f def-049}.
 	\end{proof}

 \begin{lem} 	Consider two subsets 
 	$\Omega_1$ and 
 	$\Omega_2$ within the space 
 	$\mathbb H^n_s$, where 
 	$\Omega_1$
  is  real-path-connected and 
 	$\Omega_2$ be $\Omega_1$-stem-preserving. 
Let  $f:\Omega_2\rightarrow\mathbb{H}$ be path-slice. Then
for any  $\gamma\in\mathscr{P}(\mathbb{C}^n,\Omega_1)$ we have
 	\begin{equation}\label{eq-fgamma conj}
 		F_{\Omega_1}^{f}(\gamma)
 		=\begin{pmatrix}
 			1\\ & -1
 		\end{pmatrix}F_{\Omega_1}^{f}(\overline{\gamma}).
 	\end{equation}
 \end{lem}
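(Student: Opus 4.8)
The plan is to reduce \eqref{eq-fgamma conj} to an elementary identity between $2\times2$ quaternionic matrices, once one understands how path conjugation interacts with the lift $\delta\mapsto\delta^{I}$.

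\emph{Step 1 (conjugation vs.\ lifting).} First I would record the key elementary fact that for every $I\in\mathbb{S}$ one has $\overline{\gamma}^{\,I}=\gamma^{-I}$. Indeed, writing $\gamma(t)=x(t)+y(t)i$ with $x,y$ real-valued, we get $\overline{\gamma}(t)=x(t)-y(t)i$, hence $\Psi_i^{I}(\overline{\gamma}(t))=x(t)-y(t)I=\Psi_i^{-I}(\gamma(t))$. Since $\gamma(0)\in\mathbb{R}^n$ forces $\overline{\gamma}(0)=\gamma(0)\in\mathbb{R}^n$, this also shows $\overline{\gamma}\in\mathscr{P}(\mathbb{C}^n)$; and as $\overline{\gamma}^{\,I}\subset\Omega$ is equivalent to $\gamma^{-I}\subset\Omega$, we obtain $\overline{\gamma}\in\mathscr{P}(\mathbb{C}^n,\Omega_1)$ whenever $\gamma$ is --- so that the right-hand side of \eqref{eq-fgamma conj} is meaningful --- together with $\mathbb{S}(\Omega_2,\overline{\gamma})=\{-I:I\in\mathbb{S}(\Omega_2,\gamma)\}$.

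\emph{Step 2 (choosing imaginary units and computing).} Because $\Omega_2$ is $\Omega_1$-stem-preserving and $\gamma\in\mathscr{P}(\mathbb{C}^n,\Omega_1)$, Definition~\ref{defn-lo1o2}\eqref{it-lmo2} provides two distinct elements $I,J\in\mathbb{S}(\Omega_2,\gamma)$; then $-I,-J$ are distinct elements of $\mathbb{S}(\Omega_2,\overline{\gamma})$. By the well-definedness established in Proposition~\ref{eq:234}, I may evaluate $F_{\Omega_1}^f(\gamma)$ from \eqref{eq:almost-stem-2091} using the pair $(I,J)$ and $F_{\Omega_1}^f(\overline{\gamma})$ using the pair $(-I,-J)$. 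Using $\overline{\gamma}^{-I}(1)=\gamma^{I}(1)$ and $\overline{\gamma}^{-J}(1)=\gamma^{J}(1)$, the two column vectors of function values coincide, so \eqref{eq-fgamma conj} becomes the purely algebraic identity $\begin{pmatrix}1&I\\1&J\end{pmatrix}^{-1}=\begin{pmatrix}1&\\&-1\end{pmatrix}\begin{pmatrix}1&-I\\1&-J\end{pmatrix}^{-1}$, which I would verify either by inverting both sides (equivalently, checking $\begin{pmatrix}1&-I\\1&-J\end{pmatrix}\begin{pmatrix}1&\\&-1\end{pmatrix}=\begin{pmatrix}1&I\\1&J\end{pmatrix}$) or directly from the explicit formula \eqref{eq-inverse}, noting that for the pair $(-I,-J)$ the scalar prefactor is $(-I-(-J))^{-1}=(J-I)^{-1}=-(I-J)^{-1}$, so the factor $\begin{pmatrix}1&\\&-1\end{pmatrix}$ is absorbed and the two expressions match after collecting the sign.

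The argument is essentially bookkeeping. The only two points needing attention are (a) confirming that $\overline{\gamma}$ lies in the domain $\mathscr{P}(\mathbb{C}^n,\Omega_1)$ of $F_{\Omega_1}^f$, so that the statement is not vacuous, which is handled in Step~1, and (b) respecting the non-commutativity of the quaternionic matrix entries, which is why I prefer to carry out the last step via the explicit inverse \eqref{eq-inverse} rather than abstract matrix algebra. I do not anticipate any genuine obstacle.
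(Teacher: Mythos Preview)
Your proposal is correct and follows essentially the same route as the paper: both arguments hinge on the identity $\gamma^{I}=\overline{\gamma}^{-I}$ and then reduce \eqref{eq-fgamma conj} to the elementary factorization $\begin{pmatrix}1&-I\\1&-J\end{pmatrix}=\begin{pmatrix}1&I\\1&J\end{pmatrix}\begin{pmatrix}1&\\&-1\end{pmatrix}$ (the paper inserts this as an identity between the inverse matrices, you verify it directly). Your Step~1 is in fact slightly more careful than the paper, which tacitly assumes $\overline{\gamma}\in\mathscr{P}(\mathbb{C}^n,\Omega_1)$ without comment.
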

 
 \begin{proof}
 	Consider a path 
 $\gamma$ in $\mathscr{P}(\mathbb{C}^n,\Omega_1)$.
    Given that 
	$\Omega_2$ is  
	$\Omega_1$-stem-preserving,
   there exist at least two distinct elements in $\mathbb{S}(\Omega_2,\gamma)$,
   denoted as 
 $I$ and 
 $J$
 	 with 
 	$I\neq J$.
 	 Based on equation \eqref{eq:almost-stem-2091} and the relationship 
 $$\gamma^I=\overline{\gamma}^{-I}, $$
  it follows that 
 		\begin{equation*}
 		\begin{split}
 			F_{\Omega_1}^{f}(\gamma)
 			=&\begin{pmatrix}
 				1&I\\1&J
 			\end{pmatrix}^{-1}\begin{pmatrix}
 				f\circ\gamma^I(1)\\f\circ\gamma^J(1)
 			\end{pmatrix}
 			\\=&\begin{pmatrix}
 				1&I\\1&J
 			\end{pmatrix}^{-1}\begin{pmatrix}
 				1&-I\\1&-J
 			\end{pmatrix}\begin{pmatrix}
 				1&-I\\1&-J
 			\end{pmatrix}^{-1}\begin{pmatrix}
 				f\circ\overline{\gamma}^{-I}(1)\\f\circ\overline{\gamma}^{-J}(1)
 			\end{pmatrix}
 			\\=&\begin{pmatrix}
 				1&I\\1&J
 			\end{pmatrix}^{-1}\begin{pmatrix}
 				1&I\\1&J
 			\end{pmatrix}
 			\begin{pmatrix}
 				1\\ & -1
 			\end{pmatrix}F_{\Omega_1}^{f}(\overline{\gamma})
 			\\=&\begin{pmatrix}
 				1\\ & -1
 			\end{pmatrix}F_{\Omega_1}^{f}(\overline{\gamma}).
 		\end{split}
 	\end{equation*}
 \end{proof}
 
 \begin{lem}
 	Let $\Omega_1\subset\mathbb{H}_s^n$ be real-path-connected, $\Omega_2\subset\mathbb{H}_s^n$ be $\Omega_1$-stem-preserving, $f:\Omega_2\rightarrow\mathbb{H}$ be path-slice, and $\gamma\in\mathscr{P}(\mathbb{C}^n,\Omega_1)$ with $\gamma(1)\in\mathbb{R}^n$. Then
 	\begin{equation}\label{eq-fgamma R}
 		F_{\Omega_1}^{f}(\gamma)
 		=\begin{pmatrix}
 			f\circ\gamma(1)\\0
 		\end{pmatrix}.
 	\end{equation}
 \end{lem}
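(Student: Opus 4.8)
The plan is to compute $F_{\Omega_1}^{f}(\gamma)$ directly from its defining formula \eqref{eq:almost-stem-2091}, using crucially that the endpoint $\gamma(1)$ lies in $\mathbb{R}^n$. Since $\gamma\in\mathscr{P}(\mathbb{C}^n,\Omega_1)$ and $\Omega_2$ is $\Omega_1$-stem-preserving, Definition \ref{defn-lo1o2}\eqref{it-lmo2} supplies two distinct imaginary units $I,J\in\mathbb{S}(\Omega_2,\gamma)$, so that \eqref{eq:almost-stem-2091} is applicable; by Proposition \ref{eq:234} the resulting value is independent of this choice.

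The key observation is that evaluating a lift at a real point destroys the imaginary part: writing $\gamma(1)=x\in\mathbb{R}^n$, we have $\gamma^{K}(1)=\Psi_i^{K}(x+0\cdot i)=x+0\cdot K=x$ for \emph{every} $K\in\mathbb{S}$. Hence $f\circ\gamma^{I}(1)=f\circ\gamma^{J}(1)=f\circ\gamma(1)$, and, abbreviating $a:=f\circ\gamma(1)$, formula \eqref{eq:almost-stem-2091} together with the explicit inverse \eqref{eq-inverse} yields
\begin{equation*}
	F_{\Omega_1}^{f}(\gamma)=(I-J)^{-1}\begin{pmatrix} I & -J\\ 1 & -1\end{pmatrix}\begin{pmatrix} a\\ a\end{pmatrix}=(I-J)^{-1}\begin{pmatrix} (I-J)a\\ 0\end{pmatrix}=\begin{pmatrix} a\\ 0\end{pmatrix},
\end{equation*}
which is exactly the asserted identity \eqref{eq-fgamma R}.

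I do not anticipate a genuine obstacle here. The only point requiring care is that $\mathbb{H}^{2\times 1}$ is a left module, so the scalar $(I-J)^{-1}$ multiplies the column vector on the left and cancels the factor $I-J$ appearing in the top entry by associativity, while the bottom entry $a-a=0$ is unaffected; non-commutativity of $\mathbb{H}$ causes no trouble since nothing is ever reordered. One could alternatively deduce \eqref{eq-fgamma R} by applying Lemma \ref{lem-1} to the pair $(\gamma,\overline{\gamma})\in\mathscr{P}_*^2(\mathbb{C}^n,\Omega_1)$ and combining it with the conjugation identity \eqref{eq-fgamma conj}, but that route needs the two paths to share at least two units in $\mathbb{S}(\Omega_2,\cdot)$, which is not automatic from Definition \ref{defn-lo1o2}\eqref{it-lmso}, so the direct computation above is preferable.
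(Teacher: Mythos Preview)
Your proof is correct and follows essentially the same approach as the paper: both argue that $\gamma^I(1)=\gamma^J(1)=\gamma(1)$ because the endpoint is real, and then reduce the defining formula \eqref{eq:almost-stem-2091} to $\begin{pmatrix} f\circ\gamma(1)\\ 0\end{pmatrix}$. The only cosmetic difference is that the paper avoids the explicit inverse by observing $\begin{pmatrix} a\\ a\end{pmatrix}=\begin{pmatrix}1&I\\1&J\end{pmatrix}\begin{pmatrix} a\\ 0\end{pmatrix}$ and cancelling, whereas you compute directly with \eqref{eq-inverse}; both are equally valid.
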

 
 \begin{proof}
 	Considering the path $\gamma\in\mathscr{P}(\mathbb{C}^n,\Omega_1)$ and the stem-preserving nature of \(\Omega_2\), we find at least two distinct elements in \(\mathbb{S}(\Omega_2, \gamma)\), denoted as \(I\) and \(J\) with \(I \neq J\). The condition \(\gamma(1) \in \mathbb{R}^n\) implies \(\gamma^I(1) = \gamma^J(1)\). According  to equation \eqref{eq-F def}, it can be deduced that
 	  	\begin{equation*}
 		\begin{split}
 			F_{\Omega_1}^{f}(\gamma)
 			=&\begin{pmatrix}
 				1&I\\1&J
 			\end{pmatrix}^{-1}\begin{pmatrix}
 				f\circ\gamma^I(1)\\f\circ\gamma^J(1)
 			\end{pmatrix}
 			\\=&\begin{pmatrix}
 				1&I\\1&J
 			\end{pmatrix}^{-1}\begin{pmatrix}
 				f\circ\gamma^J(1)\\f\circ\gamma^J(1)
 			\end{pmatrix}
 			\\=&\begin{pmatrix}
 				1&I\\1&J
 			\end{pmatrix}^{-1}\begin{pmatrix}
 				1&I\\1&J
 			\end{pmatrix}\begin{pmatrix}
 				f\circ\gamma^J(1)\\ 0
 			\end{pmatrix}
 			\\=&\begin{pmatrix}
 				f\circ\gamma^J(1)\\ 0
 			\end{pmatrix}.
 		\end{split}
 	\end{equation*}
 This finishes the proof. 
 \end{proof}

\section{Star-product for path-slice functions}\label{sec-star product}

In this section, we aim to introduce the concept of the star-product specifically for path-slice functions. Additionally, we will demonstrate that the star-product of two path-slice functions is also path-slice.

\begin{defn}
	Let $\Omega_1\subset\mathbb{H}_s^n$ be real-path-connected, $\Omega_2\subset\mathbb{H}_s^n$ be $\Omega_1$-stem-preserving, $f\in\mathcal{PS}(\Omega_1)$ and $g\in\mathcal{PS}(\Omega_2)$. We define
the function  	$$f*g: \Omega_1\rightarrow\mathbb{H}$$ by 
	\begin{equation*}
		f*g(q):=(f(q),\mathfrak{I}(q)f(q)) \mathscr{F}_{\Omega_1}^{g}(q). 
	\end{equation*}
The operation is called   	the $*$-product of $f$ and $g$.
\end{defn}

\begin{lem}
	Let $\Omega_1\subset\mathbb{H}_s^n$ be real-path-connected, $\Omega_2\subset\mathbb{H}_s^n$ be $\Omega_1$-stem-preserving, $c\in\mathbb{H}$, $g\in\mathcal{PS}(\Omega_2)$, $\gamma\in\mathscr{P}(\mathbb{C}^n,\Omega_1)$, $I\in\mathbb{S}(\Omega_1,\gamma)$ and $q:=\gamma^I(1)$. Then
	\begin{equation}\label{eq-overline}
		(c,Ic) F_{\Omega_1}^{g}(\gamma)=(c,-Ic) F_{\Omega_1}^{g}(\overline{\gamma})=\left(c,\mathfrak{I}(q)c\right) \mathscr{F}_{\Omega_1}^{g}(q).
	\end{equation}
\end{lem}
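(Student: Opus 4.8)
The plan is to prove the two equalities in \eqref{eq-overline} separately, using the structural lemmas already established for $F_{\Omega_1}^g$ and $\mathscr{F}_{\Omega_1}^g$, and splitting the argument according to whether $q = \gamma^I(1)$ is real or not.

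First I would handle the left equality $(c,Ic)F_{\Omega_1}^g(\gamma) = (c,-Ic)F_{\Omega_1}^g(\overline{\gamma})$. By \eqref{eq-fgamma conj} we have $F_{\Omega_1}^g(\gamma) = \begin{pmatrix} 1 & \\ & -1\end{pmatrix} F_{\Omega_1}^g(\overline{\gamma})$. Substituting this into the left-hand side gives
\begin{equation*}
	(c,Ic)\begin{pmatrix} 1 & \\ & -1\end{pmatrix} F_{\Omega_1}^g(\overline{\gamma}) = (c,-Ic)F_{\Omega_1}^g(\overline{\gamma}),
\end{equation*}
which is exactly the claim. Note this step needs $\gamma \in \mathscr{P}(\mathbb{C}^n,\Omega_1)$ and that $\Omega_2$ is $\Omega_1$-stem-preserving, both of which are hypotheses, so \eqref{eq-fgamma conj} applies directly; no case distinction is needed here.

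Next I would establish the right equality $(c,Ic)F_{\Omega_1}^g(\gamma) = (c,\mathfrak{I}(q)c)\mathscr{F}_{\Omega_1}^g(q)$, and this is where the case split enters. If $q \notin \mathbb{R}^n$, then by the lemma preceding the one on $\mathscr{F}$ we may, using $I = \pm\mathfrak{I}(q)$ (since $\gamma^I(1) = q \notin \mathbb{R}^n$ forces $I \in \{\pm\mathfrak{I}(q)\}$), replace $\gamma$ by $\gamma$ or $\overline{\gamma}$ so as to arrange a path $\tilde\gamma$ with $\tilde\gamma^{\mathfrak{I}(q)}(1) = q$ and $\tilde\gamma^{\mathfrak{I}(q)} \subset \Omega_1$; the left equality just proved shows $(c,Ic)F_{\Omega_1}^g(\gamma)$ is unchanged under $\gamma \mapsto \overline\gamma$ together with $I \mapsto -I$, so without loss of generality $I = \mathfrak{I}(q)$ and $\gamma = \tilde\gamma$. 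Then by the definition \eqref{eq-mathscr f def-049} of $\mathscr{F}_{\Omega_1}^g$ we have $\mathscr{F}_{\Omega_1}^g(q) = F_{\Omega_1}^g(\gamma)$, and the equality is immediate. If instead $q \in \mathbb{R}^n$, then $\gamma(1) = \Psi_i^I{}^{-1}(q) \in \mathbb{R}^n$ (the endpoint of $\gamma$ is real precisely when $q$ is real), so \eqref{eq-fgamma R} gives $F_{\Omega_1}^g(\gamma) = (g\circ\gamma(1),0)^T = (g(q),0)^T$, while $\mathfrak{I}(q) = 0$ and $\mathscr{F}_{\Omega_1}^g(q) = (g(q),0)^T$ as well; both sides then equal $(c, 0)(g(q),0)^T = c\,g(q)$.

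The main obstacle I anticipate is the bookkeeping in the non-real case: one must be careful that the hypotheses of the lemma producing a path with endpoint-unit $\mathfrak{I}(q)$ are met (namely that $\Omega_1$ is real-path-connected and that the path starts in $\mathbb{R}^n$), and that replacing $\gamma$ by $\overline\gamma$ correctly flips $I$ to $-I$ so the left equality can absorb the change — this is the one place where a sign error could creep in. Everything else is a direct substitution of the previously proved identities \eqref{eq-fgamma conj} and \eqref{eq-fgamma R} together with the definition of $\mathscr{F}_{\Omega_1}^g$, so the proof is essentially a matter of assembling these pieces in the right order.
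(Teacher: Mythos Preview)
Your proposal is correct and follows essentially the same route as the paper: the first equality is obtained directly from \eqref{eq-fgamma conj}, and the second is handled by the real/non-real case split, invoking \eqref{eq-fgamma R} and the definition of $\mathscr{F}_{\Omega_1}^g$ in the real case and the dichotomy $I=\pm\mathfrak{I}(q)$ in the non-real case. The only cosmetic difference is that you absorb the two sub-cases $\mathfrak{I}(q)=I$ versus $\mathfrak{I}(q)=-I$ into a single WLOG via the already-proved first equality, whereas the paper writes them out explicitly; also note that you do not actually need the existence lemma you cite, since the given $\gamma$ (or $\overline{\gamma}$) already serves as the required path.
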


\begin{proof} Utilizing equation \eqref{eq-fgamma conj},
 we have
	\begin{equation*}
		(c,Ic) F_{\Omega_1}^{g}(\gamma)=(c,Ic)\begin{pmatrix}
			1\\ & -1
		\end{pmatrix}\begin{pmatrix}
			1\\ & -1
		\end{pmatrix} F_{\Omega_1}^{g}(\gamma)
		=(c,-Ic)F_{\Omega_1}^{g}(\overline{\gamma}).
	\end{equation*}
	
	If $q$ does not belong to $\mathbb{R}^n$, then $\mathfrak{I}(q)$ equals  either $I$ or $-I$. In these scenarios, the following cases apply
	\begin{equation*}
		\begin{cases}
			\gamma^{\mathfrak{I}(q)}=\gamma^I,\mbox{ and }\mathscr{F}_{\Omega_1}^{g}(q)=F_{\Omega_1}^{g}(\gamma),\qquad & \mbox{ if }\mathfrak{I}(q)=I,
			\\\overline{\gamma}^{\mathfrak{I}(q)}=\gamma^I,\mbox{ and }\mathscr{F}_{\Omega_1}^{g}(q)=F_{\Omega_1}^{g}(\overline{\gamma}), \qquad & \mbox{ if }\mathfrak{I}(q)=-I,
		\end{cases}
	\end{equation*}
thereby validating \eqref{eq-overline}.
	
	In the case where 
	$q$ is in  $\mathbb{R}^n$, 
  according to \eqref{eq-lfo1} and \eqref{eq-fgamma R}, we deduce 
	\begin{equation*}
		\begin{split}
			(c,Ic) F_{\Omega_1}^{g}(\gamma)
			=&(c,Ic)\begin{pmatrix}
				g\circ\gamma(1)\\0
			\end{pmatrix}
			=c\cdot g\circ\gamma(1)
			\\=&c\cdot g(q)
			=\left(c,\mathfrak{I}(q)c\right)\begin{pmatrix}
				g(q)\\0
			\end{pmatrix}
			=\left(c,\mathfrak{I}(q)c\right) \mathscr{F}_{\Omega_1}^{g}(q).
		\end{split}
	\end{equation*}
\end{proof}

\begin{lem}
	Let $\Omega_1\subset\mathbb{H}_s^n$ be real-path-connected, $\Omega_2\subset\mathbb{H}_s^n$ be $\Omega_1$-stem-preserving, $f\in\mathcal{PS}(\Omega_1)$ and $g\in\mathcal{PS}(\Omega_2)$. Then
	\begin{equation}\label{eq-llfg}
		\left.\left(f*g\right)\right|_{(\Omega_1)_\mathbb{R}}
		=\left.(fg)\right|_{(\Omega_1)_\mathbb{R}}.
	\end{equation}
\end{lem}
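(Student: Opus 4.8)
The plan is to evaluate both sides of \eqref{eq-llfg} at an arbitrary point $q\in(\Omega_1)_{\mathbb{R}}$ and to exploit two facts: that $\mathfrak{I}$ vanishes on $\mathbb{R}^n$, and that $\mathscr{F}_{\Omega_1}^{g}$ has already been computed on real points in \eqref{eq-lfo1}.

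First I would fix $q\in(\Omega_1)_{\mathbb{R}}$, so that $q\in\mathbb{R}^n$, and record that by the definition of $\mathfrak{I}$ one has $\mathfrak{I}(q)=0$. Consequently the row vector appearing in the definition of the $*$-product degenerates,
$$\bigl(f(q),\ \mathfrak{I}(q)f(q)\bigr)=\bigl(f(q),\ 0\bigr),$$
so that $f*g(q)=\bigl(f(q),0\bigr)\,\mathscr{F}_{\Omega_1}^{g}(q)$.

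Next I would invoke \eqref{eq-lfo1} with the role of $f$ there played by $g$ — this is legitimate since $g\in\mathcal{PS}(\Omega_2)$ and $\Omega_2$ is $\Omega_1$-stem-preserving — to obtain $\mathscr{F}_{\Omega_1}^{g}(q)=\bigl(g(q),0\bigr)^{T}$. Carrying out the (quaternionic) matrix product, in which the left row vector multiplies the right column vector entrywise on the left, then gives
$$f*g(q)=\bigl(f(q),0\bigr)\begin{pmatrix} g(q)\\ 0\end{pmatrix}=f(q)g(q)=(fg)(q),$$
and since $q\in(\Omega_1)_{\mathbb{R}}$ was arbitrary this is exactly \eqref{eq-llfg}.

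There is essentially no obstacle here; the computation is forced once \eqref{eq-lfo1} is available. The one point that deserves a word is that $g$ is a priori defined only on $\Omega_2$ while we are evaluating it at points of $\Omega_1$: this is harmless because $(\Omega_1)_{\mathbb{R}}\subseteq\Omega_2$. Indeed, each real point $q\in\Omega_1$ is the endpoint $\gamma^{I}(1)$ of the constant path $\gamma\equiv q$, which lies in $\mathscr{P}(\mathbb{C}^n,\Omega_1)\subseteq\mathscr{P}(\mathbb{C}^n,\Omega_2)$ by \eqref{eq-mpmn}, and Definition \ref{defn-lo1o2} \eqref{it-lmo2} forces $|\mathbb{S}(\Omega_2,\gamma)|\geqslant 2$, hence $q\in\Omega_2$; this well-definedness is in any case already implicit in the statement of \eqref{eq-lfo1} and in \eqref{eq-mathscr f def-049}.
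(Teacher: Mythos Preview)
Your proof is correct and follows the same route as the paper: apply the definition of the $*$-product at a real point, use $\mathfrak{I}(q)=0$ there, and invoke \eqref{eq-lfo1} (with $g$ in place of $f$) to reduce the matrix product to $f(q)g(q)$. Your extra paragraph verifying $(\Omega_1)_{\mathbb{R}}\subseteq\Omega_2$ via constant paths is a welcome clarification that the paper leaves implicit in the statement of \eqref{eq-lfo1}.
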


\begin{proof} We begin by examining the expression $\left.\left(f*g\right)\right|_{(\Omega_1)_\mathbb{R}}$
	   Expanding this based on the definition of the 
$*$-product, we get 
\begin{equation*}
		\begin{split}
			\left.\left(f*g\right)\right|_{(\Omega_1)_{\mathbb{R}}}
			=&\left.\left[\left(f,\mathfrak{I}f\right) \mathscr{F}_{\Omega_1}^{g}\right]\right|_{(\Omega_1)_{\mathbb{R}}}
			=\left(f|_{(\Omega_1)_{\mathbb{R}}},(\mathfrak{I}f)|_{(\Omega_1)_{\mathbb{R}}}\right)\left.\mathscr{F}_{\Omega_1}^{g}\right|_{(\Omega_1)_{\mathbb{R}}}
			\\=&\left(f|_{(\Omega_1)_{\mathbb{R}}},(\mathfrak{I}f)|_{(\Omega_1)_{\mathbb{R}}}\right)\begin{pmatrix}
				g|_{(\Omega_1)_\mathbb{R}}\\0
			\end{pmatrix}
			=f|_{(\Omega_1)_{\mathbb{R}}}g|_{(\Omega_1)_\mathbb{R}}
			=\left.(fg)\right|_{(\Omega_1)_\mathbb{R}}.
		\end{split}
	\end{equation*}
In this calculation, we have applied the simplification provided by \eqref{eq-lfo1}. With this, the proof is concluded.
\end{proof}

From the canonical identification
$$ \mathbb{H}^{2\times 1} \simeq \mathbb H \otimes_{\mathbb R} \mathbb C,$$
the  standard product from  complexified quaternions   induces the $*$-product in the space $\mathbb{H}^{2\times 1}$,  given  as follows.

For any  $p:=(p_1,p_2)^T,q:=(q_1,q_2)^T\in \mathbb{H}^{2\times 1}$, we define its $*$-product as 
\begin{equation*}
	p*q:=\left(p_1\mathbb{I}+p_2\sigma\right)\left(q_1\mathbb{I}+q_2\sigma\right)e_1,
\end{equation*}
where
\begin{equation*}
	\mathbb{I}:=\begin{pmatrix}
		1\\ & 1
	\end{pmatrix},\qquad
	\sigma:=\begin{pmatrix}
		& -1\\ 1
	\end{pmatrix},  \qquad
	e_1:=\begin{pmatrix}
		1\\0
	\end{pmatrix}.
\end{equation*}

The $*$-product in  $\mathbb{H}^{2\times 1}$ can be extended to functions with values in  $\mathbb{H}^{2\times 1}$.

\begin{defn}\label{def-osmh-236}
Let $\Omega\subset\mathbb{H}_s^n$, and $F,G:\mathscr{P}(\mathbb{C}^n,\Omega)\rightarrow\mathbb{H}^{2\times 1}$. We define
\begin{equation*}
	\begin{split}
		F*G:\ \mathscr{P}(\mathbb{C}^n,\Omega)\ &\xlongrightarrow[\hskip1cm]{}\ \mathbb{H}^{2\times 1} 
		\end{split}
\end{equation*}
by
$$F*G( \gamma)=F(\gamma) * G(\gamma).$$

\end{defn}

We now  examines path-slice functions within quaternionic spaces, specifically in $\Omega_1$ and $\Omega_2$ subsets of $\mathbb{H}_s^n$, and their behaviors under the $*$ operation.

\begin{prop}\label{pr-star product}
	Let $\Omega_1\subset\mathbb{H}_s^n$ be real-path-connected, $\Omega_2\subset\mathbb{H}_s^n$ be $\Omega_1$-stem-preserving, $f\in\mathcal{PS}(\Omega_1)$ and $g\in\mathcal{PS}(\Omega_2)$. Then $f*g$ is path-slice. 
	
	Moreover, if $F$ is a path-slice stem function of $f$, then
		$F*F_{\Omega_1}^g$   is a path-slice stem function of $f*g$.
\end{prop}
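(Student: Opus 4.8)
The plan is to prove both assertions at once by showing directly that $F*F_{\Omega_1}^g$ is a path-slice stem function of $f*g$; the existence of a stem function then gives $f*g\in\mathcal{PS}(\Omega_1)$ for free. First one observes that $F*F_{\Omega_1}^g$ is well-defined on $\mathscr{P}(\mathbb{C}^n,\Omega_1)$: the map $F$ is defined there because it is a path-slice stem function of $f\in\mathcal{PS}(\Omega_1)$, and $F_{\Omega_1}^g$ is defined there by Proposition~\ref{eq:234}, so the pointwise operation of Definition~\ref{def-osmh-236} (taken with $\Omega=\Omega_1$) makes sense. It therefore remains to verify the defining identity \eqref{eq-fcg} for $f*g$, namely that
\begin{equation*}
(f*g)\circ\gamma^I(1)=(1,I)\,(F*F_{\Omega_1}^g)(\gamma)
\end{equation*}
holds for every $\gamma\in\mathscr{P}(\mathbb{C}^n,\Omega_1)$ and every $I\in\mathbb{S}(\Omega_1,\gamma)$.

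Fix such a $\gamma$ and $I$ and set $q:=\gamma^I(1)$. Unwinding the definition of the $*$-product gives $(f*g)(q)=(f(q),\mathfrak{I}(q)f(q))\,\mathscr{F}_{\Omega_1}^g(q)$. The conceptual core of the argument — and the only step that is not pure bookkeeping — is that this expression is \emph{point-based} whereas the right-hand side we must match is \emph{path-based}; the bridge between the two is Lemma~\eqref{eq-overline} applied with the scalar $c=f(q)$, which rewrites $(f(q),\mathfrak{I}(q)f(q))\,\mathscr{F}_{\Omega_1}^g(q)$ as $(f(q),I f(q))\,F_{\Omega_1}^g(\gamma)$, thereby replacing $\mathfrak{I}(q)$ and $\mathscr{F}_{\Omega_1}^g$ by the chosen imaginary unit $I$ and the path-based sub-stem function $F_{\Omega_1}^g$. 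Since Lemma~\eqref{eq-overline} already incorporates the case $q\in\mathbb{R}^n$, no case distinction is needed at this point.

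After this reduction the computation is the same two-by-two identity that shows the $*$-product on $\mathbb{H}^{2\times 1}$ models multiplication in $\mathbb H \otimes_{\mathbb R} \mathbb C$. Writing $F(\gamma)=(a_1,a_2)^T$ and $F_{\Omega_1}^g(\gamma)=(b_1,b_2)^T$, and using $f(q)=f\circ\gamma^I(1)=(1,I)F(\gamma)=a_1+Ia_2$ together with $I^2=-1$, one expands
\begin{equation*}
(f(q),If(q))\,F_{\Omega_1}^g(\gamma)=f(q)b_1+If(q)b_2=(a_1b_1-a_2b_2)+I(a_2b_1+a_1b_2).
\end{equation*}
On the other hand, from the definition of the $*$-product on $\mathbb{H}^{2\times 1}$ (equivalently $\sigma^2=-\mathbb{I}$ and $\mathbb{I}\sigma=\sigma\mathbb{I}=\sigma$) one has $(F*F_{\Omega_1}^g)(\gamma)=F(\gamma)*F_{\Omega_1}^g(\gamma)=(a_1b_1-a_2b_2,\ a_1b_2+a_2b_1)^T$, so $(1,I)(F*F_{\Omega_1}^g)(\gamma)$ equals the same quaternion. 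This establishes the displayed identity for all admissible $\gamma$ and $I$, hence $F*F_{\Omega_1}^g\in\mathcal{PSS}(f*g)$ and in particular $f*g\in\mathcal{PS}(\Omega_1)$. The only place demanding care is the non-commutativity in the step $If(q)b_2=(Ia_1-a_2)b_2$, i.e.\ keeping $I$ consistently on the left; there is no genuine obstacle, since all the substantive content has been packaged into Lemma~\eqref{eq-overline}.
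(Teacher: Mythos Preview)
Your proof is correct and follows essentially the same approach as the paper's: both arguments fix $\gamma$, $I$, $q=\gamma^I(1)$, invoke Lemma~\eqref{eq-overline} with $c=f(q)$ to pass between $(f(q),\mathfrak{I}(q)f(q))\mathscr{F}_{\Omega_1}^g(q)$ and $(f(q),If(q))F_{\Omega_1}^g(\gamma)$, and then reduce everything to the two-by-two identity $(1,I)\,(p*q)=(p_1+Ip_2)q_1+I(p_1+Ip_2)q_2$ together with $f(q)=(1,I)F(\gamma)$. The paper runs the chain starting from $(1,I)(F*F_{\Omega_1}^g)(\gamma)$ and ending at $f*g(q)$, whereas you run it in the opposite direction, but the content is identical.
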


\begin{proof}
	We start by taking a path $\gamma$ from   $\mathscr{P}(\mathbb{C}^n,\Omega_1)$. Additionally, we select an element $I$ from the set $\mathbb{S}(\Omega_1,\gamma)$. We define $q$ as   $\gamma^I(1)$.

	According  to equation \eqref{eq-overline}, we analyze the operation involving $(1,I) \left(F*F_{\Omega_1}^g\right)$ applied to a path $\gamma$. This operation is defined as the combination of the components $F_1+F_2\sigma$ and $F_{\Omega_1}^{g,1}+F_{\Omega_1}^{g,2}\sigma$ followed by multiplication with $e_1$.

	 	\begin{eqnarray*}
	 	\begin{split}
	 	  &\left\{(1,I) \left(F*F_{\Omega_1}^g\right)\right\}(\gamma)
	 		\\
	 	 =&\left\{(1,I)\left[F_1+F_2\sigma\right]\left[F_{\Omega_1}^{g,1}+F_{\Omega_1}^{g,2}\sigma\right] e_1\right\}(\gamma)
	 		\\=&\left\{(1,I)\left[\left(F_1\cdot F_{\Omega_1}^{g,1}-F_2\cdot F_{\Omega_1}^{g,2}\right)+\left(F_2\cdot F_{\Omega_1}^{g,1}+F_1\cdot F_{\Omega_1}^{g,2}\right)\sigma\right] e_1\right\}(\gamma)
	 		\\=&\left\{(1,I)\begin{pmatrix}
	 			F_1\cdot F_{\Omega_1}^{g,1}-F_2\cdot F_{\Omega_1}^{g,2}\\
	 			F_2\cdot F_{\Omega_1}^{g,1}+F_1\cdot F_{\Omega_1}^{g,2}
	 		\end{pmatrix}\right\}(\gamma)
	 		 	\end{split}
	 \end{eqnarray*}

	 Ultimately, this complex expression simplifies to  to the concise form $f*g(q)$:
	 	\begin{equation*}
		\begin{split}
		 		&(F_1+IF_2)(\gamma)\cdot F_{\Omega_1}^{g,1}(\gamma)+I(F_1+IF_2)(\gamma)\cdot F_{\Omega_1}^{g,2}(\gamma)
			\\=& f(q)F_{\Omega_1}^{g,1}\left(\gamma\right)+If(q)F_{\Omega_1}^{g,2}\left(\gamma\right)
			\\=&\left(f(q),If(q)\right)F_{\Omega_1}^{g}(\gamma)
			\\=&\left(f(q),\mathfrak{I}(q)f(q)\right) \mathscr{F}_{\Omega_1}^{g}(q)
			\\=&f*g(q).
		\end{split}
	\end{equation*}
	It implies that $F*F_{\Omega_1}^g$ is a path-slice stem function of $f*g$, which implies that $f*g$ is path-slice.
\end{proof}

We now exhibit the associative property of the $*$-product.

\begin{lem}
	Let $\Omega_1\subset\mathbb{H}_s^n$ be real-path-connected, $\Omega_2\subset\mathbb{H}_s^n$ be real-path-connected and $\Omega_1$-stem-preserving, $\Omega_3\subset\mathbb{H}_s^n$ be $\Omega_2$-stem-preserving, $f\in\mathcal{PS}(\Omega_1)$, $g\in\mathcal{PS}(\Omega_2)$ and $h\in\mathcal{PS}(\Omega_3)$. Then
	\begin{equation}\label{eq-associative}
		(f*g)*h=f*(g*h).
	\end{equation}
\end{lem}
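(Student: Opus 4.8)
The plan is to pass to path-slice stem functions and invoke the uniqueness statement Proposition~\ref{pr-spsfeq}. First I would record a transitivity observation: since $\Omega_2$ is $\Omega_1$-stem-preserving we have $\mathscr{P}(\mathbb{C}^n,\Omega_1)\subseteq\mathscr{P}(\mathbb{C}^n,\Omega_2)$, and therefore $\mathscr{P}_*^2(\mathbb{C}^n,\Omega_1)\subseteq\mathscr{P}_*^2(\mathbb{C}^n,\Omega_2)$; conditions \eqref{it-lmo2}--\eqref{it-lmso} of Definition~\ref{defn-lo1o2} for the pair $(\Omega_2,\Omega_3)$ then transfer verbatim to the pair $(\Omega_1,\Omega_3)$, so $\Omega_3$ is also $\Omega_1$-stem-preserving. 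Consequently all the relevant products are defined: $f*g,(f*g)*h,f*(g*h)\in\mathcal{PS}(\Omega_1)$ and $g*h\in\mathcal{PS}(\Omega_2)$, the path-slice property in each case coming from Proposition~\ref{pr-star product}.

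Next I fix path-slice stem functions $F\in\mathcal{PSS}(f)$, $G\in\mathcal{PSS}(g)$, $H\in\mathcal{PSS}(h)$. By Proposition~\ref{pr-star product}, $F*F_{\Omega_1}^{g}\in\mathcal{PSS}(f*g)$ and $G*F_{\Omega_2}^{h}\in\mathcal{PSS}(g*h)$; applying the same proposition once more (to the pair $(f*g,h)$ with base domains $(\Omega_1,\Omega_3)$, and to the pair $(f,g*h)$ with base domains $(\Omega_1,\Omega_2)$) gives
\[(F*F_{\Omega_1}^{g})*F_{\Omega_1}^{h}\in\mathcal{PSS}\big((f*g)*h\big),\qquad F*F_{\Omega_1}^{g*h}\in\mathcal{PSS}\big(f*(g*h)\big).\]
Hence, by Proposition~\ref{pr-spsfeq} and the fact that $\Omega_1$ is real-path-connected, it suffices to show that these two $\mathbb{H}^{2\times1}$-valued functions on $\mathscr{P}(\mathbb{C}^n,\Omega_1)$ coincide, for then $(f*g)*h$ and $f*(g*h)$ share a common path-slice stem function.

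The heart of the matter is the identity $F_{\Omega_1}^{g*h}=F_{\Omega_1}^{g}*F_{\Omega_1}^{h}$ on $\mathscr{P}(\mathbb{C}^n,\Omega_1)$. Using the restriction clause \eqref{eq-Ffomega=F} of Proposition~\ref{eq:234} we have $F_{\Omega_1}^{g}=G|_{\mathscr{P}(\mathbb{C}^n,\Omega_1)}$, $F_{\Omega_2}^{h}=H|_{\mathscr{P}(\mathbb{C}^n,\Omega_2)}$, $F_{\Omega_1}^{h}=H|_{\mathscr{P}(\mathbb{C}^n,\Omega_1)}$ (this last one needs $\Omega_3$ to be $\Omega_1$-stem-preserving, established above), and, since $G*F_{\Omega_2}^{h}$ is a path-slice stem function of $g*h$, also $F_{\Omega_1}^{g*h}=\big(G*F_{\Omega_2}^{h}\big)|_{\mathscr{P}(\mathbb{C}^n,\Omega_1)}$. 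Evaluating at $\gamma\in\mathscr{P}(\mathbb{C}^n,\Omega_1)\subseteq\mathscr{P}(\mathbb{C}^n,\Omega_2)$ and unwinding Definition~\ref{def-osmh-236} yields $F_{\Omega_1}^{g*h}(\gamma)=G(\gamma)*F_{\Omega_2}^{h}(\gamma)=F_{\Omega_1}^{g}(\gamma)*F_{\Omega_1}^{h}(\gamma)$, which is the claimed identity. Finally, the $*$-product on $\mathbb{H}^{2\times1}\simeq\mathbb{H}\otimes_{\mathbb{R}}\mathbb{C}$ is associative by construction, hence so is the induced pointwise product on $\mathbb{H}^{2\times1}$-valued functions, and therefore
\[(F*F_{\Omega_1}^{g})*F_{\Omega_1}^{h}=F*\big(F_{\Omega_1}^{g}*F_{\Omega_1}^{h}\big)=F*F_{\Omega_1}^{g*h},\]
so the two stem functions agree and the proof is complete.

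I expect the main obstacle to be bookkeeping rather than any genuine difficulty: one must carefully check that the various sub-stem functions are restrictions of \emph{one and the same} global stem function (both $F_{\Omega_1}^{h}$ and $F_{\Omega_2}^{h}$ descend from $H$, $F_{\Omega_1}^{g}$ from $G$, and $F_{\Omega_1}^{g*h}$ from $G*F_{\Omega_2}^{h}$), which hinges on the transitivity of stem-preservation and on $\Omega_2$ being real-path-connected (so that Proposition~\ref{pr-star product} and \eqref{eq-Ffomega=F} may be used with base domain $\Omega_2$). Once these compatibilities are in place, associativity of $*$ on $\mathbb{H}^{2\times1}$ together with Proposition~\ref{pr-spsfeq} closes the argument with no further computation.
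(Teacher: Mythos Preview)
Your proof is correct and follows essentially the same strategy as the paper's: pass to path-slice stem functions via Proposition~\ref{pr-star product} and \eqref{eq-Ffomega=F}, use associativity of the $*$-product on $\mathbb{H}^{2\times1}$, and conclude via Proposition~\ref{pr-spsfeq}. You are simply more explicit than the paper about the transitivity of stem-preservation (that $\Omega_3$ is $\Omega_1$-stem-preserving) and the identity $F_{\Omega_1}^{g*h}=F_{\Omega_1}^{g}*F_{\Omega_1}^{h}$, both of which the paper uses tacitly when it writes $(F*G|_{D_1})*H|_{D_1}=F*(G*H|_{D_2})|_{D_1}$.
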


\begin{proof}
	Let $F\in\mathcal{PSS}(f)$, $G\in\mathcal{PSS}(g)$, $H\in\mathcal{PSS}(h)$ and $D_\imath:={\mathscr{P}(\mathbb{C}^n,\Omega_\imath)}$, $\imath=1,2,3$. According to Proposition \ref{pr-star product},
	\begin{eqnarray*}
		&&F*G|_{D_1}\in\mathcal{PSS}(f*g), 
		\\
		&&G*H|_{D_2}\in\mathcal{PSS}(g*h).
	\end{eqnarray*}
	Again by Proposition \ref{pr-star product},
	\begin{eqnarray*}
	&&(F*G|_{D_1})*H|_{D_1}\in\mathcal{PSS}((f*g)*h), 
		\\
	&&F*(G*H|_{D_2})|_{D_1}\in\mathcal{PSS}(f*(g*h)).
	\end{eqnarray*}
From Proposition \ref{pr-spsfeq}, we have 
	\begin{eqnarray*}
		(F*G|_{D_1})*H|_{D_1}&=&F*G|_{D_1}*h|_{D_1}
		\\
		&=&F*(G*H|_{D_2})|_{D_1}
	\end{eqnarray*}
so	that $(f*g)*h=f*(g*h)$.
\end{proof}

\section{Self-stem-preserving domains}\label{sec-self stem}
In this section, we delve into the concept of a new kind of domain, referred to as self-stem-preserving. These domains expand upon the idea of axially symmetric domains, establishing a unique relationship between path-slice functions and path-slice stem functions. We will explore how, in these domains, the collection of slice functions constitutes an associative unitary real algebra, particularly when they are combined through the use of the $*$-product. Our discussion includes presenting examples of self-stem-preserving domains. Furthermore, we assert that any slice-domain, provided it is open in the Euclidean sense and intersects with the real axis, can be classified as self-stem-preserving. Lastly, we will demonstrate that every weakly axially symmetric slice-domain inherently displays characteristics of self-stem-preservation.
\begin{defn}
	A subset $\Omega$ of $\mathbb{H}_s^n$ 
	is termed self-stem-preserving if it is real-path-connected and also 
	  $\Omega$-stem-preserving.
\end{defn}

\begin{thm}
	Let $\Omega\subset\mathbb{H}_s^n$ be self-stem-preserving. Then a function $f:\Omega\to \mathbb H$ is a path-slice function if and only if $F_{\Omega}^f:  \mathscr{P}(\mathbb{C}^n,\Omega) \rightarrow \mathbb{H}^{2\times 1}$
	is the unique path-slice stem function of $f$. The relationship between them is expressed by the formula:
	\begin{equation}\label{eq:almost-stem-209}
	F_{\Omega}^f(\gamma) = \begin{pmatrix} 1 & I \\ 1 & J \end{pmatrix}^{-1} \begin{pmatrix} f\circ\gamma^I(1) \\  f\circ\gamma^J(1)  \end{pmatrix}. 
\end{equation}
	  Here   $I,J$ are arbitrary elements in $\mathbb{S}(\Omega,\gamma)$ with  the condition that   $I \neq J$.	
\end{thm}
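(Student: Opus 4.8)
The plan is to read the statement off Proposition~\ref{eq:234} in the degenerate case $\Omega_1=\Omega_2=\Omega$: a self-stem-preserving set is by definition both real-path-connected and $\Omega$-stem-preserving, so all hypotheses of that proposition are satisfied. The one extra observation needed is that when $\Omega_1=\Omega_2=\Omega$ the domain $\mathscr{P}(\mathbb{C}^n,\Omega_1)$ on which $F_{\Omega_1}^f$ is defined coincides with $\mathscr{P}(\mathbb{C}^n,\Omega)$, which is exactly the domain on which every path-slice stem function of $f$ lives; thus the word ``restriction'' in \eqref{eq-Ffomega=F} becomes an honest equality, and that is what turns the statement of Proposition~\ref{eq:234} into a uniqueness statement.

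The ``if'' direction is immediate: if $F_\Omega^f$ is a path-slice stem function of $f$, then $f$ admits a path-slice stem function, which is precisely the definition of $f\in\mathcal{PS}(\Omega)$ (the hypothesis already presupposes that $F_\Omega^f$ is a well-defined path-slice stem function of $f$). For the ``only if'' direction, assume $f\in\mathcal{PS}(\Omega)$ and fix any $F\in\mathcal{PSS}(f)$. Proposition~\ref{eq:234}, applied with $\Omega_1=\Omega_2=\Omega$, shows that the right-hand side of \eqref{eq:almost-stem-209} is independent of the choice of distinct $I,J\in\mathbb{S}(\Omega,\gamma)$ --- such a pair exists since $|\mathbb{S}(\Omega,\gamma)|\geqslant 2$ by Definition~\ref{defn-lo1o2}\,\eqref{it-lmo2} --- so $F_\Omega^f$ is well-defined, and moreover \eqref{eq-Ffomega=F} gives $F_\Omega^f=F|_{\mathscr{P}(\mathbb{C}^n,\Omega)}=F$. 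Hence $F_\Omega^f$ is itself a path-slice stem function of $f$, and since $F\in\mathcal{PSS}(f)$ was arbitrary, every path-slice stem function of $f$ equals $F_\Omega^f$, i.e.\ $F_\Omega^f$ is the unique one. Finally, \eqref{eq:almost-stem-209} is nothing but the defining identity \eqref{eq:almost-stem-2091} specialized to $\Omega_2=\Omega$; alternatively, for $I\in\mathbb{S}(\Omega,\gamma)$ one picks $J\in\mathbb{S}(\Omega,\gamma)\setminus\{I\}$ and reads the first coordinate of $\begin{pmatrix}1&I\\1&J\end{pmatrix}F_\Omega^f(\gamma)=\begin{pmatrix}f\circ\gamma^I(1)\\f\circ\gamma^J(1)\end{pmatrix}$ to recover \eqref{eq-fcg} directly.

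There is no genuine analytic difficulty: the content is the bookkeeping already done in Proposition~\ref{eq:234}. The two points deserving attention are (i) the identification of $\mathscr{P}(\mathbb{C}^n,\Omega)$ as simultaneously the domain of $F_\Omega^f$ and of every path-slice stem function of $f$, without which one would only get that $F_\Omega^f$ agrees with stem functions after restriction rather than being the unique stem function; and (ii) that \eqref{eq-fcg} must hold for \emph{every} $I\in\mathbb{S}(\Omega,\gamma)$ and not merely for the pair $(I,J)$ used to write $F_\Omega^f(\gamma)$ --- this is precisely where the well-definedness half of Proposition~\ref{eq:234} (resting on Definition~\ref{defn-lo1o2}\,\eqref{it-lmo2}--\eqref{it-lmso} and Lemma~\ref{lem-1}) is invoked.
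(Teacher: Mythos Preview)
Your proof is correct and follows essentially the same approach as the paper, which simply states that the theorem is a direct consequence of Propositions~\ref{pr-spsfeq}, \ref{eq:234} and Lemma~\ref{lem-1}. You give the argument in more detail than the paper does, and in fact your route through Proposition~\ref{eq:234} alone (noting that the restriction in \eqref{eq-Ffomega=F} becomes an equality when $\Omega_1=\Omega_2=\Omega$) makes the appeal to Proposition~\ref{pr-spsfeq} unnecessary.
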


\begin{proof}
This theorem is a direct consequence of  Propositions \ref{pr-spsfeq},  \ref{eq:234} and Lemma \ref{lem-1}.
	\end{proof}

\begin{thm}
For a self-stem-preserving subset  $\Omega$ of  $\mathbb{H}_s^n$,  the set of  path-slice functions  $\mathcal{PS}(\Omega)$ forms an associative unitary real algebra with the usual addition and the  $*$-product. 
\end{thm}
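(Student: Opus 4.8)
The plan is to verify the algebra axioms one at a time, using the infrastructure already in place. First I would note that $\mathcal{PS}(\Omega)$ is plainly a real vector space under pointwise addition and real scalar multiplication, since a real linear combination of path-slice functions is path-slice (if $F\in\mathcal{PSS}(f)$ and $G\in\mathcal{PSS}(g)$ then $aF+bG\in\mathcal{PSS}(af+bg)$ for $a,b\in\mathbb{R}$, directly from \eqref{eq-fcg}). Since $\Omega$ is self-stem-preserving, it is real-path-connected and $\Omega$-stem-preserving, so Proposition \ref{pr-star product} applies with $\Omega_1=\Omega_2=\Omega$ and guarantees that $f*g\in\mathcal{PS}(\Omega)$ whenever $f,g\in\mathcal{PS}(\Omega)$; thus $*$ is a well-defined binary operation on $\mathcal{PS}(\Omega)$. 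Associativity is exactly the content of \eqref{eq-associative} applied with $\Omega_1=\Omega_2=\Omega_3=\Omega$ (the hypotheses of that lemma are met because $\Omega$ real-path-connected and $\Omega$-stem-preserving means $\Omega$ is simultaneously $\Omega_1$-, $\Omega_2$- and $\Omega_3$-stem-preserving in the required sense).

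Next I would check the unit. The candidate is the constant function $\mathbf{1}:\Omega\to\mathbb{H}$, $q\mapsto 1$, which is path-slice with stem function $F\equiv(1,0)^T$ (so $(1,I)F(\gamma)=1$ for all $\gamma$ and all $I\in\mathbb{S}(\Omega,\gamma)$). To show $\mathbf{1}*f=f$ and $f*\mathbf{1}=f$ for all $f\in\mathcal{PS}(\Omega)$, I would use the stem-function description from Proposition \ref{pr-star product}: a stem function of $\mathbf{1}*f$ is $F_{\mathbf{1}}*F_{\Omega}^f$ where $F_{\mathbf{1}}\equiv(1,0)^T$, and one computes in $\mathbb{H}^{2\times1}\simeq\mathbb{H}\otimes_{\mathbb R}\mathbb C$ that $(1\cdot\mathbb{I}+0\cdot\sigma)(q_1\mathbb{I}+q_2\sigma)e_1=(q_1,q_2)^T$, so $F_{\mathbf{1}}*F_{\Omega}^f=F_{\Omega}^f$; similarly for $F_{\Omega}^f*F_{\mathbf{1}}$ using that $(q_1\mathbb{I}+q_2\sigma)(\mathbb{I})e_1=(q_1,q_2)^T$. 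Since $\Omega$ is self-stem-preserving, $F_{\Omega}^f$ is the unique path-slice stem function of $f$ (by the preceding theorem), so two path-slice functions sharing the stem function $F_{\Omega}^f$ coincide by Proposition \ref{pr-spsfeq}; hence $\mathbf{1}*f=f=f*\mathbf{1}$.

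Finally I would verify the two distributive laws and compatibility of $*$ with real scalars. For distributivity, given $f,g,h\in\mathcal{PS}(\Omega)$ with stem functions $F,G,H$, Proposition \ref{pr-star product} gives that $F*(G+H)|_{\mathscr{P}(\mathbb{C}^n,\Omega)}$ is a stem function of $f*(g+h)$ while $(F*G+F*H)|_{\mathscr{P}(\mathbb{C}^n,\Omega)}$ is a stem function of $f*g+f*h$; the pointwise-on-paths bilinearity of the $\mathbb{H}^{2\times1}$-valued $*$-product (immediate from the bilinear formula $p*q=(p_1\mathbb{I}+p_2\sigma)(q_1\mathbb{I}+q_2\sigma)e_1$ and Definition \ref{def-osmh-236}) shows these two stem functions are equal, so by Proposition \ref{pr-spsfeq} the two functions coincide; the left-distributive law and the identity $r(f*g)=(rf)*g=f*(rg)$ for $r\in\mathbb{R}$ are handled the same way (here one uses that $r$ is real and hence central in $\mathbb{H}$, so it passes freely through the matrix products). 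The main obstacle — really the only nonroutine point — is making sure the $\mathbb{H}^{2\times1}$-valued $*$-product behaves as the product of complexified quaternions: one must check associativity, bilinearity over $\mathbb{R}$, and the unit property of $(1,0)^T$ for the operation $p*q=(p_1\mathbb{I}+p_2\sigma)(q_1\mathbb{I}+q_2\sigma)e_1$, which amounts to observing that $p\mapsto p_1\mathbb{I}+p_2\sigma$ is an injective $\mathbb{R}$-algebra homomorphism from $\mathbb{H}\otimes_{\mathbb R}\mathbb C$ into $2\times2$ matrices over $\mathbb{H}$ with $p*q$ recovered by evaluating at $e_1$; once this is in hand, every algebra axiom for $\mathcal{PS}(\Omega)$ transfers from the corresponding (trivial) identity among stem functions via the uniqueness statement of the preceding theorem together with Proposition \ref{pr-spsfeq}.
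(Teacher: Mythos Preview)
Your proposal is correct and follows essentially the same approach as the paper: verify the real vector space structure directly, invoke Proposition~\ref{pr-star product} for closure, \eqref{eq-associative} for associativity, the constant function $1_\Omega$ as unit, and scalar compatibility via the stem-function calculus together with Proposition~\ref{pr-spsfeq}. The paper's own proof is considerably terser---it asserts the ring structure and the identity $(\lambda f)*g=\lambda(f*g)=f*(\lambda g)$ without spelling out the unit or distributivity checks---so your more explicit verification of those axioms through the $\mathbb{H}^{2\times1}$-valued $*$-product is a welcome elaboration rather than a different route.
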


\begin{proof}
	Verifying that  $(\mathcal{PS}(\Omega),+)$ is a real vector space is straightforward. The ring structure of  $(\mathcal{PS}(\Omega),+,*)$  with the unitary element
 $1_{\Omega}:\Omega\rightarrow\mathbb{H}$,  mapping $q$ to $1$,  is evident from the associative property as shown in equation \eqref{eq-associative}. Additionally, the following equation
	\begin{equation*}
		(\lambda f)*g=\lambda(f*g)=f*(\lambda g) 
	\end{equation*}
for all  $f,g\in\mathcal{PS}(\Omega)$ demonstrates that $(\mathcal{PS}(\Omega),+,*)$ constitutes an associative unitary real algebra.
\end{proof}

We will now present examples of self-stem-preserving domains.

We consider the induced topology of $\mathbb{H}^n_s$ as a subset of  $\mathbb{H}^n$, which carries the Euclidean topology $\tau(\mathbb{H}^n)$. Specifically, the topology on 
  $\mathbb{H}^n_s$  is defined as

\begin{equation*}
	\tau(\mathbb{H}^n_s):=\{U\cap\mathbb{H}_s^n: U\in\tau(\mathbb{H}^n)\}.
\end{equation*}

  \begin{lem}\label{pr-slicedomain rpc}
  	Let \(\Omega \subset \mathbb{H}_s^n\) be a slice-domain with \(\Omega_{\mathbb{R}} \neq \varnothing\). Then \(\Omega\) is real-path-connected.
  \end{lem}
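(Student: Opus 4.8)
The plan is to show that $\Omega$ is real-path-connected by using the slice-connectedness together with the hypothesis $\Omega_{\mathbb{R}} \neq \varnothing$, and reducing the problem to one slice at a time. First I would fix an arbitrary point $q \in \Omega$; say $q \in \mathbb{C}_I^n$ for some $I \in \mathbb{S}$, so $q \in \Omega_I$. Since $\Omega$ is a slice-domain, it is slice-connected, hence (being slice-open) also slice-path-connected in the slice topology $\tau_s(\mathbb{H}^n_s)$. The idea is to produce a slice-path in $\Omega$ from a real point $p \in \Omega_{\mathbb{R}}$ to $q$, and then observe that such a slice-path can be chopped into finitely many pieces, each lying in a single slice $\Omega_J$, which is an ordinary domain in $\mathbb{C}_J^n \cong \mathbb{C}^n$.

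The key steps, in order, are as follows. \emph{Step 1:} Use slice-connectedness of $\Omega$ to get a slice-path $\sigma \colon [0,1] \to \Omega$ with $\sigma(0) = p \in \Omega_{\mathbb{R}}$ and $\sigma(1) = q$; I should check that in the slice topology, connected slice-open sets are slice-path-connected (this is an analogue of the classical fact, and should follow from the definition of $\tau_s$ since each $\Omega_I$ is an ordinary open subset of $\mathbb{C}_I^n$). \emph{Step 2:} By compactness of $[0,1]$ and the definition of the slice topology, cover $[0,1]$ by finitely many intervals on each of which $\sigma$ lands in a single slice $\mathbb{C}_{J_k}^n$; this is where one has to be a little careful, because a slice-path may pass through $\mathbb{R}^n$ (which lies in every slice) when transitioning between slices. \emph{Step 3:} Arrange the decomposition so that the transition points between consecutive slices lie in $\Omega_{\mathbb{R}}$ — this is possible because the only way a continuous slice-path can leave one slice $\mathbb{C}_{J}^n$ and enter a genuinely different one is through the common intersection $\mathbb{R}^n$. \emph{Step 4:} On each piece, which is an honest continuous path inside the ordinary domain $\Omega_{J_k} \subset \mathbb{C}^n_{J_k}$ with real endpoints (except possibly the last, whose endpoint is $q$), lift/re-express it as a path in $\mathscr{P}(\mathbb{C}^n, \Omega)$; concatenating these and recording that the very first piece starts at $p \in \mathbb{R}^n$, I obtain a single path $\gamma \in \mathscr{P}(\mathbb{C}^n,\Omega)$ with $\gamma^{J}(1) = q$ for the appropriate $J$, i.e.\ $J \in \mathbb{S}(\Omega,\gamma)$ and $\gamma^J(1) = q$. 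That is precisely the condition in Definition \ref{def-osmh}.

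The main obstacle I anticipate is Step 3: making rigorous the claim that a slice-path in $\mathbb{H}^n_s$ can only move between distinct slices through $\mathbb{R}^n$, and hence that one can choose the subdivision of $[0,1]$ so that all the ``slice-change'' points are real. One has to rule out pathologies where the path oscillates infinitely often between slices near a non-real point, and confirm that the set of parameters $t$ with $\sigma(t) \notin \mathbb{R}^n$ decomposes into intervals each mapping into one fixed slice. I would handle this by noting that for $t_0$ with $\sigma(t_0) \notin \mathbb{R}^n$, continuity in the slice topology forces $\sigma$ to stay in the single slice $\mathbb{C}_{\mathfrak{I}(\sigma(t_0))}^n$ on a whole neighborhood of $t_0$ (since $\mathbb{C}_K^n \setminus \mathbb{R}^n$ is slice-open), so the ``non-real part'' of the path is locally constant in its slice, giving a locally finite — hence finite, by compactness — decomposition; the endpoints of these maximal intervals then necessarily map into $\mathbb{R}^n$, which also meets $\Omega$, so they lie in $\Omega_{\mathbb{R}}$. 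Once that structural fact is in hand, the concatenation in Step 4 is routine and the lemma follows.
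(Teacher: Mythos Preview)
Your Step~4 has a genuine gap. When you concatenate the pull-backs $\gamma_1,\dots,\gamma_m$ in $\mathbb{C}^n$ (each $\gamma_k=(\Psi_i^{J_k})^{-1}\circ\sigma_k$ coming from the piece in $\Omega_{J_k}$), the resulting path $\gamma$ does start at a real point and satisfy $\gamma^{J_m}(1)=q$, but there is no reason that $\gamma^{J_m}\subset\Omega$: for $k<m$ the piece $\gamma_k^{J_m}=\Psi_i^{J_m}\circ(\Psi_i^{J_k})^{-1}\circ\sigma_k$ is the ``rotation'' of $\sigma_k\subset\Omega_{J_k}$ into the slice $\mathbb{C}_{J_m}^n$, and nothing forces this to land in $\Omega$. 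So the concatenated $\gamma$ need not lie in $\mathscr{P}(\mathbb{C}^n,\Omega)$ with $J_m\in\mathbb{S}(\Omega,\gamma)$, which is exactly what Definition~\ref{def-osmh} requires.

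The fix is immediate and also dissolves your Steps~2--3: you only need the \emph{last} piece. Since $\mathbb{R}^n$ is slice-closed, $t_*:=\max\{t\in[0,1]:\sigma(t)\in\mathbb{R}^n\}$ exists; on $(t_*,1]$ the slice of $\sigma$ is locally constant, hence constant and equal to that of $q$, so $\sigma|_{[t_*,1]}$ is a path in $\Omega_I$ from a point of $\Omega_{\mathbb{R}}$ to $q$. Pulling this single piece back to $\mathbb{C}^n$ gives the required $\gamma$. (Your ``locally finite, hence finite by compactness'' claim for the decomposition is also not right as stated---the maximal intervals on which $\sigma$ avoids $\mathbb{R}^n$ can accumulate---but with the above fix it is no longer needed.)

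By contrast, the paper bypasses slice-paths entirely with a one-line separation argument: if the connected component $U$ of $q$ in $\Omega_I$ (ordinary topology of $\mathbb{C}_I^n$) contained no real point, then $U$ and $\Omega\setminus U$ would both be non-empty and slice-open (for $J\neq\pm I$ one has $U\cap\mathbb{C}_J^n\subset U\cap\mathbb{R}^n=\varnothing$), contradicting slice-connectedness of $\Omega$. Hence some point of $\Omega_{\mathbb{R}}$ lies in the same component of $\Omega_I$ as $q$, and an ordinary path in $\Omega_I$ joining them finishes the proof. Your repaired argument and the paper's reach the same endpoint---a path inside a single $\Omega_I$ from $\Omega_{\mathbb{R}}$ to $q$---but the paper gets there without invoking slice-path-connectedness or any decomposition of a slice-path.
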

  
  \begin{proof}
  	Consider any point \(q \in \Omega\). This point resides in \(\mathbb{C}_I^n\) for some \(I \in \mathbb{S}\). If no path exists in \(\Omega_I\) connecting a point in \(\Omega_{\mathbb{R}}\) to \(q\), then \(q \notin \Omega_{\mathbb{R}}\). Let \(U\) be the connected component of \(\Omega_I\) that contains \(q\), considered within the topology \(\tau(\mathbb{C}_I)\). Then, \(U\) and \(\Omega \setminus U\) are two disjoint, non-empty slice-open sets, implying that \(\Omega\) is not slice-connected, which is a contradiction. Thus, there must be a path \(\gamma^I\) in \(\Omega_I\) from some point in \(\Omega_{\mathbb{R}}\) to \(q\), proving that \(\Omega\) is real-path-connected.
  \end{proof}

\begin{thm}\label{pr-selfsp}
If $\Omega\in\tau(\mathbb{H}_s^n)$ is a slice-domain with $\Omega_\mathbb{R}\neq\varnothing$,  then $\Omega$ is self-stem-preserving.
\end{thm}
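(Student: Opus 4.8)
The plan is to unwind the definition: a self-stem-preserving set is by definition one that is real-path-connected and $\Omega$-stem-preserving, so two things must be checked. Real-path-connectedness is immediate from Lemma~\ref{pr-slicedomain rpc}, since $\Omega$ is a slice-domain with $\Omega_{\mathbb{R}}\neq\varnothing$. It therefore remains to verify conditions \eqref{it-lmo2} and \eqref{it-lmso} of Definition~\ref{defn-lo1o2} in the case $\Omega_1=\Omega_2=\Omega$.

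Both conditions will be deduced from a single topological observation: \emph{for every $\gamma\in\mathscr{P}(\mathbb{C}^n)$ the set $\mathbb{S}(\Omega,\gamma)=\{I\in\mathbb{S}:\gamma^I\subset\Omega\}$ is open in $\mathbb{S}$.} To prove this, fix $I\in\mathbb{S}(\Omega,\gamma)$, and use $\Omega\in\tau(\mathbb{H}_s^n)$ to write $\Omega=U\cap\mathbb{H}_s^n$ with $U\in\tau(\mathbb{H}^n)$. The image $\gamma^I([0,1])$ is a compact subset of $\Omega\subset U$, so for some $\varepsilon>0$ the Euclidean $\varepsilon$-neighbourhood of $\gamma^I([0,1])$ in $\mathbb{H}^n$ is contained in $U$. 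Writing $\gamma(t)=a(t)+ib(t)$ with $a(t),b(t)\in\mathbb{R}^n$, one has $\gamma^J(t)-\gamma^I(t)=((J-I)b_1(t),\dots,(J-I)b_n(t))$, hence $\|\gamma^J(t)-\gamma^I(t)\|_{\mathbb{H}^n}=|J-I|\,\|b(t)\|\leqslant |J-I|\max_{t\in[0,1]}\|b(t)\|$, and this maximum is finite by continuity of $\gamma$. Thus for $J\in\mathbb{S}$ sufficiently close to $I$ the whole path $\gamma^J$ stays inside the $\varepsilon$-tube, hence inside $U$; since also $\gamma^J\subset\mathbb{C}_J^n\subset\mathbb{H}_s^n$, we get $\gamma^J\subset U\cap\mathbb{H}_s^n=\Omega$, i.e. $J\in\mathbb{S}(\Omega,\gamma)$. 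This proves openness.

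Now condition \eqref{it-lmo2} follows: if $\gamma\in\mathscr{P}(\mathbb{C}^n,\Omega)$ then $\mathbb{S}(\Omega,\gamma)\neq\varnothing$ by definition, and a nonempty open subset of $\mathbb{S}$ — a $2$-sphere, which has no isolated points — is infinite, so in particular $|\mathbb{S}(\Omega,\gamma)|\geqslant 2$. Condition \eqref{it-lmso} is then almost free: for $(\alpha,\beta)\in\mathscr{P}_*^2(\mathbb{C}^n,\Omega)$ the set $\mathbb{S}(\Omega,\alpha)\cap\mathbb{S}(\Omega,\beta)$ is an intersection of two open subsets of $\mathbb{S}$, hence open, hence either empty or infinite; in either case its cardinality is not $1$. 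Combining this with real-path-connectedness, $\Omega$ is self-stem-preserving.

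I expect no serious obstacle here. The only point demanding care is the interplay between the subspace topology $\tau(\mathbb{H}_s^n)$ and the ambient Euclidean topology of $\mathbb{H}^n$: one must pass to a genuine Euclidean open set $U$, extract an $\varepsilon$-tube around the \emph{compact} path image, and control $\gamma^J$ uniformly in $t$ as $J\to I$. Once the openness of $\mathbb{S}(\Omega,\gamma)$ in $\mathbb{S}$ is in place, the rest of the argument is purely formal.
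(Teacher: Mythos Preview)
Your proof is correct and follows essentially the same approach as the paper: both pass to a Euclidean open set $U$ with $\Omega=U\cap\mathbb{H}_s^n$, use compactness of $\gamma^I([0,1])$ to get an $\varepsilon$-tube inside $U$, and then perturb $I$ to a nearby $J\in\mathbb{S}$. Your packaging of this as ``$\mathbb{S}(\Omega,\gamma)$ is open in $\mathbb{S}$'' is a little cleaner --- the paper runs the perturbation argument separately for conditions \eqref{it-lmo2} and \eqref{it-lmso} rather than deducing both from openness --- but the substance is identical.
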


\begin{proof}
	Lemma \ref{pr-slicedomain rpc} ensures that 
$\Omega$   is real-path-connected.

	(i) Since \(\Omega \in \tau(\mathbb{H}_s^n)\), there exists \(\mathcal{O} \in \tau(\mathbb{H}^n)\) such that \(\Omega = \mathcal{O} \cap \mathbb{H}_s^n\). For any  \(\gamma \in \mathscr{P}(\mathbb{C}^n, \Omega)\) and \(I \in \mathbb{S}(\Omega, \gamma)\), the closed path   \(\gamma([0,1])\)   in \(\mathcal{O}\) yields a positive distance 
	\begin{equation*}
		r := \mathrm{dist}_{\mathbb{H}^n}(\gamma([0,1]), \mathbb{H}^n \setminus \mathcal{O}) > 0.
	\end{equation*}
Choosing  \(J \in \mathbb{S} \setminus \{I\}\) with a sufficiently small magnitude of $|J-I|$ guarantees that
	\begin{equation*}
		\sup_{t \in [0,1]} |\gamma^I(t) - \gamma^J(t)| < r.
	\end{equation*}
	This implies $\gamma^J \subseteq \mathcal{O} \cap \mathbb{H}_s^n = \Omega$,  making  $J$ a member of $\mathbb{S}(\Omega, \gamma)$. Therefore,
	for every   $\gamma \in \mathscr{P}(\mathbb{C}^n, \Omega)$ we have 
	\begin{equation*}
		|\mathbb{S}(\Omega, \gamma)| > 1.
	\end{equation*}
	
	(ii) For paths  $\alpha, \beta$ in $\mathscr{P}(\mathbb{C}^n, \Omega)$ that share an element in their respective sets  $\mathbb{S}(\Omega, \alpha)$ and $\mathbb{S}(\Omega, \beta)$,
	 and end at the same point  \(\alpha(1) = \beta(1)\). Let \(I \in \mathbb{S}(\Omega, \alpha) \cap \mathbb{S}(\Omega, \beta)\). Choose \(J \in \mathbb{S} \setminus \{I\}\) with \(|J - I|\) small enough such that
	\begin{equation*}
		\sup_{t \in [0,1], \gamma \in \{\alpha, \beta\}} |\gamma^I(t) - \gamma^J(t)| < \mathrm{dist}_{\mathbb{H}^n}(\alpha([0,1]) \cup \beta([0,1]), \mathbb{H}^n \setminus \mathcal{O}),
	\end{equation*}
	where \(\mathcal{O}\)  is as previously defined.  This condition ensures that both 
	$\alpha^J$ and $\beta^J$ are within $\Omega$,   leading to
	\begin{equation*}
		|\mathbb{S}(\Omega, \alpha) \cap \mathbb{S}(\Omega, \beta)|  \geqslant 2.
	\end{equation*}
	 
Combining these results from (i) and (ii), we conclude that  \(\Omega\) is \(\Omega\)-stem-preserving
  and, therefore, self-stem-preserving.
\end{proof}

\begin{cor}\label{pr-selfsp-367}
If   $\Omega$ is  an euclidean domain  in $\mathbb{H}$ that  intersects with the real  axis and  $\Omega_I$ forms   a domain in $\mathbb{C}_I$ for every  $I\in\mathbb{S}$, then $\Omega$ is self-stem-preserving.
\end{cor}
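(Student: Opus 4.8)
The plan is to reduce the statement to Theorem~\ref{pr-selfsp}. Since $\Omega$ is a domain in $\mathbb{H}$, we are in the case $n=1$, where $\mathbb{H}_s^1=\bigcup_{I\in\mathbb{S}}\mathbb{C}_I=\mathbb{H}$; thus it suffices to verify that $\Omega$, regarded as a subset of $\mathbb{H}_s^1=\mathbb{H}$, is a slice-domain belonging to $\tau(\mathbb{H}_s^1)$ and satisfies $\Omega_{\mathbb{R}}\neq\varnothing$. Two of these are immediate: $\Omega$ is open in the Euclidean topology of $\mathbb{H}=\mathbb{H}_s^1$, so $\Omega\in\tau(\mathbb{H}_s^1)$; and $\Omega$ meets the real axis by hypothesis, so $\Omega_{\mathbb{R}}=\Omega\cap\mathbb{R}\neq\varnothing$. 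Moreover, for each $I\in\mathbb{S}$ the slice $\Omega_I=\Omega\cap\mathbb{C}_I$ is a domain in $\mathbb{C}_I$, in particular open, so $\Omega\in\tau_s(\mathbb{H}_s^1)$ by the definition of the slice topology; that is, $\Omega$ is slice-open.

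The one nontrivial point is that $\Omega$ is slice-connected. I would argue by contradiction: if $\Omega=A\sqcup B$ with $A,B$ nonempty, disjoint and slice-open, then for every $I\in\mathbb{S}$ we get $\Omega_I=A_I\sqcup B_I$ with $A_I=A\cap\mathbb{C}_I$ and $B_I=B\cap\mathbb{C}_I$ both open in $\mathbb{C}_I$. Connectedness of $\Omega_I$ forces one of $A_I,B_I$ to be empty. Now fix a point $x\in\Omega_{\mathbb{R}}$; since $x\in\mathbb{R}\subseteq\mathbb{C}_I$ for every $I\in\mathbb{S}$, the point $x$ lies in exactly one of $A,B$, say $x\in A$, whence $x\in A_I$ for every $I$, so $A_I\neq\varnothing$ and hence $B_I=\varnothing$ for every $I\in\mathbb{S}$. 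As $B\subseteq\Omega\subseteq\mathbb{H}_s^1=\bigcup_{I\in\mathbb{S}}\mathbb{C}_I$, we conclude $B=\bigcup_{I\in\mathbb{S}}B_I=\varnothing$, a contradiction. Hence $\Omega$ is slice-connected, so it is a slice-domain, and Theorem~\ref{pr-selfsp} gives that $\Omega$ is self-stem-preserving.

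The heart of the argument — and the place I expect to need the most care — is the slice-connectedness step, since it is exactly where the hypotheses ``$\Omega_I$ is a domain for every $I$'' and ``$\Omega$ intersects the real axis'' enter: a real point belongs simultaneously to all slices $\mathbb{C}_I$, and therefore forces the slicewise pieces to be consistently labelled. (Note that Euclidean connectedness of $\Omega$ is not even needed here; connectedness of the individual slices together with the existence of a common real point already does the job.) All remaining steps are routine bookkeeping with the slice topology, of the kind already carried out in the proof of Theorem~\ref{pr-selfsp}.
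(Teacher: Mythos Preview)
Your proof is correct and follows exactly the paper's approach—reduction to Theorem~\ref{pr-selfsp} in the case $n=1$—while explicitly verifying slice-connectedness, a point the paper leaves implicit. Your observation that Euclidean connectedness of $\Omega$ is not actually used (only connectedness of each $\Omega_I$ together with $\Omega_{\mathbb R}\neq\varnothing$) is a nice sharpening of what is needed.
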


\begin{proof}
This result follows immediately from Theorem \ref{pr-selfsp} in the case where $n=1$.  
	\end{proof}

\begin{defn}
A domain $\Omega$ within $\mathbb{H}_s^n$ is defined as weakly axially symmetric if, for any point $q = x + yI$ in $\Omega$, its conjugate $\overline{q} = x - yI$ also lies within $\Omega$.
\end{defn}

\begin{thm}  If a domain $\Omega$ within $\mathbb{H}_s^n$ is a weakly axially symmetric slice-domain and   $\Omega_{\mathbb{R}}$ is non-empty, then $\Omega$ is  a self-stem-preserving domain.
\end{thm}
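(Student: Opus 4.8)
The plan is to verify the two defining requirements of a self-stem-preserving domain directly, using the weak axial symmetry to manufacture a ``free'' second imaginary unit. Real-path-connectedness of $\Omega$ is immediate: $\Omega$ is a slice-domain with $\Omega_{\mathbb{R}}\neq\varnothing$, so Lemma \ref{pr-slicedomain rpc} applies verbatim. Thus the substance of the argument is to show that $\Omega$ is $\Omega$-stem-preserving, i.e., that conditions \eqref{it-lmo2} and \eqref{it-lmso} of Definition \ref{defn-lo1o2} hold with $\Omega_1=\Omega_2=\Omega$.

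The one observation I would record first is a conjugation identity for lifts. For $\gamma\in\mathscr{P}(\mathbb{C}^n,\Omega)$, write $\gamma(t)=x(t)+y(t)i$ with $x,y\colon[0,1]\to\mathbb{R}^n$; then for every $I\in\mathbb{S}$ and every $t$ one has $\gamma^{-I}(t)=x(t)-y(t)I=\overline{\gamma^{I}(t)}$, so that $\gamma^{-I}=\overline{\gamma}^{\,I}$ as paths in $\mathbb{C}_I^n$. Consequently, if $I\in\mathbb{S}(\Omega,\gamma)$ then every point of $\gamma^I([0,1])$ lies in $\Omega$, hence by weak axial symmetry (applied coordinatewise to points $x+yI$ with $x,y\in\mathbb{R}^n$) every point of $\gamma^{-I}([0,1])$ lies in $\Omega$ as well; that is, $-I\in\mathbb{S}(\Omega,\gamma)$. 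In short, $\mathbb{S}(\Omega,\gamma)$ is stable under the involution $I\mapsto-I$, and since $I^2=-1$ forces $I\neq-I$ this already doubles up any element.

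With this in hand both conditions follow at once. For \eqref{it-lmo2}: any $\gamma\in\mathscr{P}(\mathbb{C}^n,\Omega)$ admits some $I\in\mathbb{S}(\Omega,\gamma)$ by definition, hence $-I\in\mathbb{S}(\Omega,\gamma)$ too, so $|\mathbb{S}(\Omega,\gamma)|\geqslant2$. For \eqref{it-lmso}: given $(\alpha,\beta)\in\mathscr{P}_*^2(\mathbb{C}^n,\Omega)$, if $\mathbb{S}(\Omega,\alpha)\cap\mathbb{S}(\Omega,\beta)=\varnothing$ there is nothing to check; otherwise choose $I$ in the intersection, and applying the conjugation observation separately to $\alpha$ and to $\beta$ gives $-I$ in the intersection as well, so it has at least two elements. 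In either case its cardinality is not $1$. Combining with real-path-connectedness, $\Omega$ is self-stem-preserving.

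I do not expect a genuine obstacle here: the only place requiring mild care is the bookkeeping that conjugation fixes $\mathbb{R}^n$ (so $\overline{\gamma}$ and $\gamma^{-I}$ are still legitimate members of $\mathscr{P}(\mathbb{C}^n)$ and of $\mathscr{P}(\mathbb{C}^n,\Omega)$), together with the routine check that the definition of weak axial symmetry is being used at each parameter value $t$.
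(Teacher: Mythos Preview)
Your proposal is correct and follows essentially the same approach as the paper's own proof: invoke Lemma \ref{pr-slicedomain rpc} for real-path-connectedness, then use the identity $\gamma^{-I}=\overline{\gamma^{I}}$ together with weak axial symmetry to show that $\mathbb{S}(\Omega,\gamma)$ is closed under $I\mapsto-I$, which immediately yields both conditions \eqref{it-lmo2} and \eqref{it-lmso}. Your write-up is in fact slightly more careful than the paper's in spelling out the conjugation identity and the case split in \eqref{it-lmso}.
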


\begin{proof}
	  Lemma \ref{pr-slicedomain rpc} confirms that $\Omega$ is real-path-connected.
	
	(i) Considering any path $\gamma$ from $\mathscr{P}(\mathbb{C}^n,\Omega)$, there exists an imaginary unit $I$ in $\mathbb{S}$ such that the path
	 $\gamma^I$ is included in $\Omega$. Given a weakly axially symmetry domain  $\Omega$, the conjugate path $\gamma^{-I}$ (equal to $\overline{\gamma^I}$) also lies within $\Omega$. Hence, both $I$ and its negative counterpart $-I$ are elements of $\mathbb{S}(\Omega,\gamma)$, ensuring that the size of this set is at least two, or $|\mathbb{S}(\Omega,\gamma)| \geqslant 2$.
	
	(ii) For any pair of paths $\alpha$ and $\beta$ in $\mathscr{P}_*^2(\mathbb{C}^n,\Omega)$ with the intersection $\mathbb{S}(\Omega,\alpha) \cap \mathbb{S}(\Omega,\beta)\neq\varnothing$, let's assume $I$ is an element in this intersection. Utilizing the rationale from (i), we can deduce that    $-I$, also belongs to this intersection. Consequently, the intersection $\mathbb{S}(\Omega,\alpha) \cap \mathbb{S}(\Omega,\beta)$ contains at least two distinct elements, implying $|\mathbb{S}(\Omega,\alpha) \cap \mathbb{S}(\Omega,\beta)| \geqslant 2$.
	\end{proof}

In conclusion, our study introduces the innovative concept of self-path-preserving slice domains, which effectively supplants the role traditionally held by axially symmetric domains in the field of slice analysis for quaternions. 
A key example of this is seen in every slice-connected Euclidean domain, which intersects the real axis, is real-path-connected, which inherently qualify as self-path-preserving slice domains. The defining characteristic of these domains is their ability to establish a one-to-one correspondence   between slice functions and path-stem functions, a fundamental aspect that enhances the analytical process.
Moreover, within a self-path-preserving slice-domain, the entirety of slice functions forms an associative algebra when combined using the $*$-product. 
Our forthcoming research will delve into the theory of slice regular functions operating over self-path-preserving slice domains. This exploration promises to expand our understanding and application of these functions in a more comprehensive and nuanced way, leveraging the unique properties of self-path-preserving slice domains to develop new insights and methodologies in the field.

\bibliographystyle{plain}
\bibliography{mybibfile}

\end{document}